
\documentclass{amsart}
\usepackage{amssymb}

\numberwithin{equation}{section}
\newtheorem{thm}{Theorem}[section]
\newtheorem{lem}[thm]{Lemma}
\newtheorem{prop}[thm]{Proposition}

\newtheorem{conj}[thm]{Conjecture}
\theoremstyle{definition}

\newtheorem{prob}[thm]{Problem}
\theoremstyle{remark}
\newtheorem{rmk}[thm]{Remark}

\title{Conjectures on tilting modules and antispherical $p$-cells}

 \author{Pramod N. Achar}
 \address{Department of Mathematics\\
   Louisiana State University\\
   Baton Rouge, LA 70803\\
   U.S.A.}
 \email{pramod@math.lsu.edu}
 
  \author{William Hardesty}
   \address{Department of Mathematics\\
   Louisiana State University\\
   Baton Rouge, LA 70803\\
   U.S.A.}
  \email{whardesty@lsu.edu}
 
 \author{Simon Riche}
 \address{Universit\'e Clermont Auvergne, CNRS, LMBP, F-63000 Clermont-Ferrand, France.
 }
 \email{simon.riche@uca.fr}
 
 \thanks{P.A. was supported by NSF Grant No.~DMS-1802241. This project has received funding from the European Research Council (ERC) under the European Union's Horizon 2020 research and innovation programme (grant agreement No. 677147).}

\usepackage{mathrsfs}
\usepackage{hyperref}

\newcommand{\bk}{\Bbbk}
\newcommand{\Z}{\mathbb{Z}}
\newcommand{\C}{\mathbb{C}}

\newcommand{\Rep}{\mathrm{Rep}}
\newcommand{\Tilt}{\mathrm{Tilt}}
\newcommand{\Fr}{\mathrm{Fr}}

\newcommand{\tilt}{\mathsf{T}}
\newcommand{\irr}{\mathsf{L}}

\newcommand{\red}{{\mathrm{red}}}
\newcommand{\unip}{{\mathrm{unip}}}

\newcommand{\sG}{\mathsf{G}}
\newcommand{\sg}{\mathsf{g}}
\newcommand{\fg}{\mathfrak{g}}

\newcommand{\cohsgb}{\mathsf{H}_{\sg}^\bullet}
\newcommand{\Tsim}{\sim^{\mathsf{T}}}

\newcommand{\LVsim}{\sim^{\mathrm{LV}}}
\newcommand{\SL}{\mathrm{SL}}
\newcommand{\GL}{\mathrm{GL}}
\newcommand{\sa}{\mathsf{a}}
\newcommand{\mult}{\mathsf{mult}}
\newcommand{\cP}{\mathcal{P}}
\newcommand{\reg}{{\mathrm{reg}}}
\newcommand{\sreg}{{\mathrm{sreg}}}

\newcommand{\bX}{\mathbf{X}}

\newcommand{\bY}{\mathbf{Y}}
\newcommand{\Wext}{W_{\mathrm{ext}}}
\newcommand{\Wf}{W_{\mathrm{f}}}

\newcommand{\fWext}{{}^{\mathrm{f}} \hspace{-1pt} W_{\mathrm{ext}}}

\newcommand{\fWfext}{ {}^{\mathrm{f}} \hspace{-1pt} \Wext^{\mathrm{f}}}
\newcommand{\lcalc}{\underline{\mathrm{Alc}}}

\newcommand{\Haff}{\mathcal{H}}
\newcommand{\Hext}{\mathcal{H}_{\mathrm{ext}}}
\newcommand{\Hf}{\mathcal{H}_{\mathrm{f}}}
\newcommand{\puH}{{}^p \hspace{-1pt} \underline{H}}

\newcommand{\puN}{{}^p \hspace{-1pt} \underline{N}}
\newcommand{\Masph}{\mathcal{M}_{\mathrm{asph}}}
\newcommand{\sgn}{\mathrm{sgn}}

\newcommand{\cN}{\mathcal{N}}
\newcommand{\scO}{{\mathscr{O}}}
\newcommand{\Coh}{\mathrm{Coh}}
\newcommand{\scL}{\mathscr{L}}
\newcommand{\scT}{\mathscr{T}}

\newcommand{\bb}{\mathbf{b}}
\newcommand{\bc}{\mathbf{c}}


\DeclareMathOperator{\Ext}{Ext}

\newcommand{\simto}{\xrightarrow{\sim}}
\newcommand{\simbij}{\overset{\sim}{\longleftrightarrow}}
\newcommand{\la}{\langle}
\newcommand{\ra}{\rangle}
\newcommand{\supp}{\mathrm{supp}}

\newcommand{\Waff}{W}

\makeatletter
\def\lotimes{\@ifnextchar_{\@lotimessub}{\@lotimesnosub}}
\def\@lotimessub_#1{\mathchoice{\mathbin{\mathop{\otimes}^L}_{#1}}%
  {\otimes^L_{#1}}{\otimes^L_{#1}}{\otimes^L_{#1}}}
\def\@lotimesnosub{\mathbin{\mathop{\otimes}^L}}
\makeatother

\begin{document}

\begin{abstract}
For quantum groups at a root of unity, there is a web of theorems (due to Bezrukavnikov and Ostrik, and relying on work of Lusztig) connecting the following topics: (i)~tilting modules; (ii)~vector bundles on nilpotent orbits; and (iii)~Kazhdan--Lusztig cells in the affine Weyl group.  In this paper, we propose a (partly conjectural) analogous picture for reductive algebraic groups over fields of positive characteristic, inspired by a conjecture of Humphreys.
\end{abstract}

\maketitle

\section{Introduction}
\label{sec:intro}

Let $\sG$ be a connected reductive algebraic group over an algebraically closed field $\bk$ of characteristic $p$, with simply-connected derived subgroup.  Assume that $p$ is larger than the Coxeter number $h$ for $\sG$.  The goal of this paper is to present some conjectures relating the following three topics: (i)~tilting $\sG$-modules; (ii)~coherent sheaves on the nilpotent cone; and (iii)~Kazhdan--Lusztig cells (or $p$-cells) in the affine Weyl group.  These conjectures can be seen as an attempt to put the Humphreys conjecture  on support varieties of tilting modules (\cite{hum:cmr}) in a bigger picture (involving, in particular, the Lusztig--Vogan bijection), in the hope of proving it in full generality.

There is a parallel story in characteristic $0$, in which the reductive group $\sG$ is replaced by a quantum group at a root of unity.  In this setting, the analogues of our conjectures are theorems, mostly due to Bezrukavnikov and Ostrik, cf.~\cite{bezru2} and \cite{ostrik} respectively, and rely on work of Lusztig (see in particular~\cite{lusztig}).
Our conjectures are inspired by these results (which we will review below), as well as by work of Andersen~\cite{hha:pcells} in the positive characteristic setting.

\section{Weyl groups, Hecke algebras, and cells}

\subsection{Settings}

Let $R = (\bX, \Phi, \bY, \Phi^\vee)$ be a root datum, and let $h$ be the Coxeter number for $\Phi$. We will assume that $\bY/\Z\Phi^\vee$ has no torsion. Fix a set of positive roots $\Phi^+ \subset \Phi$, and let $\bX^+ \subset \bX$ be the corresponding set of dominant weights.  We will consider the following two settings:
\begin{enumerate}
\item (\textbf{The quantum case}) Let $\bk$ be an algebraically closed field of characteristic $0$, and let $\sG$ be the Lusztig quantum group associated to $R$ over $\bk$, specialized at an $\ell$th root of unity.  Here, we assume that $\ell$ is odd, that $3 \nmid \ell$ if $\Phi$ contains a factor of type $\mathbf{G}_2$, and that $\ell > h$.
\item (\textbf{The reductive case}) Let $\bk$ be an algebraically closed field of characteristic $\ell > h$, and let $\sG$ be the connected reductive algebraic group over $\bk$ with root datum $R$.
\end{enumerate}
In the quantum case, one may take $\bk = \C$, of course.  It will be convenient to have a separate notation for the characteristic of $\bk$.  We set
\[
p = \mathop{\mathrm{char}} \bk =
\begin{cases}
0 & \text{in the quantum case,} \\
\ell & \text{in the reductive case.}
\end{cases}
\]
In both cases, the isomorphism classes of indecomposable tilting $\sG$-modules (of type I in the quantum case) are naturally parametrized by $\bX^+$.  For $\lambda \in \bX^+$, let
\[
\tilt(\lambda)
\]
be the indecomposable tilting $\sG$-module with highest weight $\lambda$.

\subsection{Weyl groups}

Let $\Wf$ be the (finite) Weyl group of $\Phi$, and let $\Wext = \Wf \ltimes \bX$ be the (extended) affine Weyl group. For $\lambda \in \bX$, we write $t_\lambda$ for the corresponding element of $\Wext$. 
The subgroup $\Waff := \Wf \ltimes \Z\Phi \subset \Wext$ admits a natural Coxeter group structure, whose length function is given by
\[
   \ell(t_\lambda x) = \sum_{\substack{\alpha \in \Phi^+ \\ x(\alpha) \in \Phi^+}} |\langle \lambda, \alpha^\vee \rangle| + \sum_{\substack{\alpha \in \Phi^+ \\ x(\alpha) \in -\Phi^+}} |1+\langle \lambda, \alpha^\vee \rangle|
\]
for $x \in \Wf$ and $\lambda \in \Z\Phi$. This formula allows one to extend this function to $\Wext$. Moreover, if we set $\Omega := \{w \in \Wext \mid \ell(w)=0\}$, then conjugation by elements of $\Omega$ induces Coxeter group automorphisms of $\Waff$, and multiplication induces a group isomorphism $\Omega \ltimes \Waff \simto \Wext$.

For $\lambda \in \bX$, we set
\[
w_\lambda = \text{the unique element of minimal length in the right coset $\Wf t_\lambda \subset \Wext$,}
\]
and let $\fWext = \{ w_\lambda : \lambda \in \bX \}$.  This is precisely the set of minimal-length representatives for the set of right cosets $\Wf \backslash \Wext$.  In other words, the assignment $\lambda \mapsto w_\lambda$ gives a bijection
\begin{equation}\label{eqn:fw-param}
\bX \simto \fWext \cong \Wf \backslash \Wext.
\end{equation}

Next, let $\fWfext$ be the set of minimal-length coset representatives for the double quotient $\Wf \backslash \Wext / \Wf$.  This is a subset of $\fWext$; more precisely we have $w_\lambda \in \fWfext$ if and only if $\lambda \in - \bX^+$.  In other words,~\eqref{eqn:fw-param} restricts to a bijection
\begin{equation}\label{eqn:fwf-param}
-\bX^+ \simto \fWfext \cong \Wf \backslash \Wext / \Wf.
\end{equation}

We define the ($\ell$-dilated) ``dot action'' of $\Wext$ on $\bX$ as follows: for $w = v \cdot t_\lambda$, where $v \in \Wf$ and $\lambda \in \bX$, we set
\[
w \cdot_\ell \mu = v(\mu + \ell\lambda + \rho) - \rho,
\]
where, as usual, $\rho = \frac{1}{2} \sum_{\alpha \in \Phi^+} \alpha$.  It is well known that $w \cdot_\ell 0 \in \bX^+$ if and only if $w \in \fWext$. 

\subsection{Hecke algebras and cells}

Let $\Hext$ be the Hecke algebra attached to $\Wext$, i.e.~the free $\Z[v,v^{-1}]$-module with a basis $\{H_w : w \in \Wext\}$ and multiplication determined by the following rules:
\begin{gather*}
 (H_s + v)(H_s - v^{-1})=0 \quad \text{if $\ell(s)=1$;}\\
 H_x \cdot H_y = H_{xy} \quad \text{if $\ell(xy)=\ell(x)+\ell(y)$.}
\end{gather*}
Then the subalgebra $\Hf$, resp.~$\Haff$, of $\Hext$ spanned by the elements $H_w$ with $w \in \Wf$, resp.~$\Waff$, identifies with the Hecke algebra of the Coxeter group $\Wf$, resp.~$\Waff$. At $v=1$, the algebra $\Hext$, resp.~$\Haff$, resp.~$\Hf$, specializes to the group algebra of $\Wext$, resp.~$\Waff$, resp.~$\Wf$.

Let $\sgn$ denote the ``sign representation'' of $\Hf$, i.e.~the $\Hf$-module structure on $\Z[v,v^{-1}]$ for which $H_s$ acts by multiplication by $-v$ if $s \in \Wf$ and $\ell(s)=1$. (This module specializes at $v = 1$ to the usual sign representation of $\Wf$.)  Recall that the \emph{antispherical module} is the right $\Hext$-module given by
\[
\Masph = \sgn \otimes_{\Hf} \Hext.
\] 

Following~\cite{jw,rw}, the algebra $\Hext$ admits a basis $( \puH_w : w \in \Wext )$, known as the \emph{$p$-canonical basis}; these elements satisfy
\begin{equation}
\label{eqn:puH-Wext}
 \puH_{w\omega} = \puH_w \cdot H_\omega \quad \text{and} \quad \puH_{\omega w} = H_\omega \cdot \puH_w
\end{equation}
for $\omega \in \Omega$ and $w \in \Waff$.
In the quantum case, i.e., when $p = 0$, this basis goes back to~\cite{kl}, and is also known as the \emph{Kazhdan--Lusztig basis}.  In $\Masph$, we have $1 \otimes \puH_w = 0$ if and only if $w \notin \fWext$.  Let
\[
\puN_w = 1 \otimes \puH_w \in \Masph
\qquad\text{for $w \in \fWext$.}
\]
The set $(\puN_w : w \in \fWext )$ forms a $\Z[v,v^{-1}]$-basis for $\Masph$, again known as the \emph{$p$-canonical basis of the antispherical module}.

An $\Hext$-submodule $M' \subset \Masph$ is called a \emph{$p$-based submodule} if it is spanned (as a $\Z[v,v^{-1}]$-module) by the $p$-canonical basis elements that it contains.  For any $m \in \Masph$, there is a unique minimal $p$-based submodule containing $m$; we call this submodule the \emph{principal $p$-based submodule generated by $m$}.  (In general, this submodule is much larger than the usual submodule $m \Haff$ generated by $m$.)  We define an equivalence relation on $\fWext$ as follows:
\[
w \sim^p v
\qquad
\begin{array}{@{}c@{}}
\text{if $\puN_w$ and $\puN_v$ generate the same}\\
\text{principal $p$-based submodule of $\Masph$.}
\end{array}
\]
Equivalence classes for $\sim^p$ are called \emph{antispherical $p$-cells}.

One can similarly define $p$-based left, right, or two-sided ideals in $\Hext$, and consider the principal $p$-based ideals generated by an element $\puH_w$ in $\Hext$.  This leads to the notions of \emph{left}, \emph{right}, and \emph{two-sided $p$-cells} in $\Wext$. In this way, one can interpret the antispherical $p$-cells as considered above as the right $p$-cells which intersect $\fWext$ (or, equivalently, are contained in $\fWext$); see e.g.~\cite[\S 5.3]{ahr1}.

Again, when $p = 0$, these notions go back to~\cite{kl}, and are often simply called \emph{Kazhdan--Lusztig cells}. The main result of~\cite{lx} implies that every two-sided $0$-cell in $\Wext$ meets $\fWext$ in exactly one antispherical $0$-cell.  Thus, there is a canonical bijection
\begin{equation}\label{eqn:2sided}
\{ \text{two-sided $0$-cells} \} \simbij \{ \text{antispherical $0$-cells} \}.
\end{equation}

\begin{rmk}
\label{rmk:cells-Wext}
 The theory of cells (or $p$-cells) is usually mostly considered in the setting of Coxeter groups (see~\cite{kl,jensen}); in the present setting this would mean replacing $\Hext$ by $\Haff$ (with its basis $( \puH_w : w \in \Waff )$), and the antispherical module $\Masph$ by the $\Haff$-submodule $\Masph'=\sgn \otimes_{\Hf} \Haff$ (with its basis $(\puN_w : w \in \fWext \cap \Waff)$). However one can check using~\eqref{eqn:puH-Wext} that our relation $\sim^p$ restricts to the ``usual'' equivalence relation on $\Waff$.  Moreover, one can see that for $w,w' \in \Waff$ and $\omega, \omega' \in \Omega$ we have
 \[
  w\omega \sim^p w'\omega' \quad \Leftrightarrow \quad w \sim^p w'.
 \]
In particular, antispherical cells are stable under right multiplication by elements in $\Omega$, and intersecting with $\Waff$ induces a bijection between the set of antispherical $p$-cells for $\Wext$ and the set of ``usual'' antispherical $p$-cells for the Coxeter group $\Waff$. Similar comments apply to the other sorts of cells considered above.
\end{rmk}

\subsection{Nilpotent orbits and the Lusztig--Vogan bijection}
\label{ss:nilpotent}

Let $\dot G$ be the connected reductive group over $\bk$ with root datum $(\ell\bX, \ell\Phi, \frac{1}{\ell}\bY, \frac{1}{\ell} \Phi^\vee)$.  Of course, this root datum is isomorphic to $R$, but the weight lattice has been scaled by $\ell$.  In the reductive case, $\dot G$ can be identified with the Frobenius twist of $\sG$.  In the quantum case, $\dot G$ plays a similar conceptual role: the universal enveloping algebra of its Lie algebra $U(\dot\fg)$ is the target of Lusztig's quantum Frobenius map.

Let $\cN$ be the nilpotent cone in the Lie algebra of $\dot G$.  It is well known that $\dot G$ acts on $\cN$ with finitely many orbits.  The following remarkable theorem tells us, in particular, that the set of antispherical $0$-cells is also finite.

\begin{thm}[Lusztig~\cite{lusztig}]\label{thm:lusztig}
There is a canonical bijection
\[
\begin{array}{r@{}c@{}l}\{\text{antispherical $0$-cells}\} &{}\simbij{}& \{ \text{$\dot G$-orbits on $\cN$} \} \\
\bc &\mapsto& \scO_\bc
\end{array}
\]
\end{thm}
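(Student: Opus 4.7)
The plan is to reduce to Lusztig's classical bijection between two-sided $0$-cells of $\Waff$ and $\dot G$-orbits on $\cN$, and then to invoke it. The reduction takes two steps: by~\eqref{eqn:2sided}, antispherical $0$-cells are already in canonical bijection with two-sided $0$-cells of $\Wext$; and by Remark~\ref{rmk:cells-Wext}, two-sided cells of $\Wext$ correspond via intersection with $\Waff$ to two-sided cells of the Coxeter group $\Waff$. So it suffices to construct the bijection on the level of two-sided $0$-cells of $\Waff$.

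For this, I would follow Lusztig's original route. Start with his $a$-function $a\colon \Waff \to \Z_{\geq 0}$, verify it is constant on two-sided cells, and form the asymptotic Hecke algebra $J = \bigoplus_{\bc} J_\bc$, decomposed by two-sided cells. Each block $J_\bc$ carries rich structure---including a distinguished finite-dimensional $\Wf$-representation $\mathcal{E}_\bc$ built from the distinguished involutions lying in $\bc$, and, more deeply, an equivariant $K$-theoretic incarnation. From this data one extracts a canonical nilpotent $\dot G$-orbit $\scO_\bc \subset \cN$, for instance by applying the Springer correspondence to $\mathcal{E}_\bc$ and forgetting the local system.

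The main obstacle is proving bijectivity of the resulting map $\bc \mapsto \scO_\bc$. For surjectivity, one must exhibit, for each nilpotent orbit, at least one two-sided cell mapping to it; Lusztig does so by attaching explicit affine Weyl group elements to unipotent classes via the exponential map, together with explicit $a$-function computations. Injectivity is more delicate: the Springer correspondence detects local-system data in addition to the orbit, so one must show that only a highly restricted family of Springer pairs arises from two-sided cells of $\Waff$. These steps, carried out in~\cite{lusztig} via a blend of type-by-type combinatorial analyses and uniform structural theorems about $J$, constitute the essential content of the theorem; any alternative approach would have to confront these same difficulties.
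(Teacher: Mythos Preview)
Your reduction to Lusztig's original statement is exactly what the paper does: invoke~\eqref{eqn:2sided} and Remark~\ref{rmk:cells-Wext} to pass from antispherical $0$-cells in $\Wext$ to two-sided $0$-cells in the Coxeter group $\Waff$, and then appeal to~\cite{lusztig}. The paper does not reprove Lusztig's result, so your sketch of the $a$-function, asymptotic Hecke algebra, and Springer-correspondence mechanism is additional detail that the paper simply delegates to the citation.

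There is, however, one point the paper addresses that you omit. Lusztig's theorem in~\cite{lusztig} is proved for $\bk = \C$, whereas here $\dot G$ is defined over a field $\bk$ that may have positive characteristic. The left-hand side of the bijection (antispherical $0$-cells) is purely combinatorial and independent of $\bk$, but the right-hand side ($\dot G$-orbits on $\cN$) is not, a priori. The paper closes this gap by noting that our hypotheses force $p$ to be good for $\dot G$, so the Bala--Carter theorem gives a canonical, characteristic-independent parametrization of nilpotent orbits; this lets one transport Lusztig's bijection from $\C$ to $\bk$. Without this observation your reduction does not literally land on the statement proved in~\cite{lusztig}.
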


This result was stated in~\cite{lusztig} in terms of two-sided cells in $\Waff$ rather than antispherical cells for $\Wext$, but we may rephrase it this way using~\eqref{eqn:2sided} and Remark~\ref{rmk:cells-Wext}.  The statement in~\cite{lusztig} also assumes that $\bk = \C$.  However, our assumptions on $\bk$ imply that $p$ is at least good for $\dot G$.  It is well known that in good characteristic, the set of nilpotent orbits admits a combinatorial parametrization (via the Bala--Carter theorem) that depends only on the root datum.  In other words, the set of nilpotent orbits is independent of $p$.  Thus, Theorem~\ref{thm:lusztig} remains valid for all $p$ considered here.

In~\cite{lusztig}, Lusztig conjectured that the bijection of Theorem~\ref{thm:lusztig} could be refined in a way that involves $\fWfext$ and vector bundles on nilpotent orbits.  This refined bijection, which was independently conjectured by Vogan~\cite{vogan} from a different perspective and has been established by Bezrukavnikov, is known as the \emph{Lusztig--Vogan bijection}.  To state this bijection, we need some additional notation.

Given an orbit $\scO \subset \cN$, let $\Coh^{\dot G}(\scO)$ be the abelian category of $\dot G$-equivariant coherent sheaves on $\scO$.  Of course, every $\dot G$-equivariant coherent sheaf on $\scO$ is automatically a vector bundle, and hence an extension of irreducible vector bundles.  Let
\[
\Sigma_\scO = 
\begin{array}{@{}c@{}}
\text{the set of isomorphism classes of irreducible} \\
\text{$\dot G$-equivariant vector bundles on $\scO$}.
\end{array}
\]
For $\sigma \in \Sigma_\scO$, denote the corresponding irreducible vector bundle by
\[
\scL_\scO(\sigma).
\]
Finally, let
\[
\Xi = \{ (\scO,\sigma) \mid \text{$\scO \subset \cN$ a $\dot G$-orbit, and $\sigma \in \Sigma_\scO$} \}.
\]

\begin{thm}[Lusztig--Vogan bijection~\cite{bezru1, bezru2, achar, ahr3}]\label{thm:lv-bij}
\ 
\begin{enumerate}
\item There is a canonical bijection\label{it:lvbij}
\[
\begin{array}{r@{}c@{}l}
- \bX^+ &{}\simbij{}& \Xi \\
\lambda & \mapsto & (\scO_\lambda, \sigma_\lambda)
\end{array}
\]
\item The bijection of part~\eqref{it:lvbij} is compatible with Theorem~\ref{thm:lusztig} in the following way: for $\lambda \in -\bX^+$, we have\label{it:lvcompat}
\[
\scO_\lambda = \scO_\bc,
\]
where $\bc \subset \fWext$ is the antispherical $0$-cell containing $w_\lambda$.
\end{enumerate}
\end{thm}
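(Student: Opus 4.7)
The plan is to follow the perverse-coherent-sheaf approach pioneered by Bezrukavnikov~\cite{bezru2} and adapted to the reductive/positive-characteristic setting in~\cite{achar,ahr3}. The key geometric input is the middle-perversity t-structure on $D^{\mathrm{b}}\Coh^{\dot G}(\cN)$: a foundational theorem of Bezrukavnikov identifies the simple objects of its heart with the set $\Xi$, assigning to each pair $(\scO,\sigma)$ an intermediate-extension object supported on $\overline\scO$ and restricting, up to shift, to $\scL_\scO(\sigma)$ on $\scO$. Granting this classification, part~\eqref{it:lvbij} reduces to producing a canonical bijection between $-\bX^+$ and the set of isomorphism classes of simple perverse-coherent sheaves.

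To build this bijection I would associate to each $\lambda \in -\bX^+$ a distinguished perverse-coherent sheaf obtained by applying truncation (with respect to the middle-perversity t-structure) to an object of $D^{\mathrm{b}}\Coh^{\dot G}(\cN)$ constructed from the line bundle $\scO_{\tilde\cN}(\lambda)$ on the Springer resolution $\pi \colon \tilde\cN \to \cN$. This object has a unique simple head, say $\mathrm{IC}(\scO_\lambda,\sigma_\lambda)$, whose parameter defines the pair $(\scO_\lambda,\sigma_\lambda)$. Injectivity of $\lambda \mapsto (\scO_\lambda,\sigma_\lambda)$ follows from compatibility with a natural partial order on weights refining the closure order on orbits, while surjectivity is established by induction on orbit closure, realizing every simple perverse-coherent sheaf as such a head by a successive-approximation argument.

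For part~\eqref{it:lvcompat} I would invoke the monoidal equivalence, due to Bezrukavnikov, between the antispherical category categorifying $\Masph$ (at $p=0$) and an appropriate category of perverse-coherent sheaves on $\cN$. Under this equivalence $\puN_{w_\lambda}$ categorifies to an object whose simple head is $\mathrm{IC}(\scO_\lambda,\sigma_\lambda)$, and the principal based submodule generated by $\puN_{w_\lambda}$ corresponds to the subcategory of objects supported on $\overline{\scO_\lambda}$. Hence two elements $w_\lambda, w_\mu$ lie in the same antispherical $0$-cell if and only if $\scO_\lambda = \scO_\mu$; combined with Theorem~\ref{thm:lusztig}, this yields $\scO_\lambda = \scO_\bc$.

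The main obstacle is the foundational geometric input: constructing the middle-perversity t-structure on $D^{\mathrm{b}}\Coh^{\dot G}(\cN)$, proving that its cohomology functors take values in a well-behaved abelian category, classifying the simples by $\Xi$, and establishing the Hecke-categorical equivalence that translates cell-theoretic statements for $\Masph$ into support statements on $\cN$. Once these deep results are in hand the remaining arguments are essentially bookkeeping, but in the reductive/positive-characteristic case even the foundational setup requires significant additional care since it cannot be imported directly from complex geometry.
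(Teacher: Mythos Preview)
Your sketch is essentially the right one, and it matches what the paper describes: the paper does not give its own proof of this theorem but instead records a brief history of the cited results. Part~\eqref{it:lvbij} is exactly the perverse-coherent-sheaf argument you outline (Bezrukavnikov over $\C$, adapted by Achar to positive characteristic), and for part~\eqref{it:lvcompat} the paper likewise points to Bezrukavnikov's second construction via tensor/cell categories and the comparison in~\cite{bezru3}, which is what your ``monoidal equivalence'' paragraph is invoking.

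One point where your proposal is thinner than the paper's account: you treat part~\eqref{it:lvcompat} only at $p=0$ and then wave at ``additional care'' in positive characteristic. The paper is more specific here: the Hecke-categorical equivalence you invoke is a characteristic-zero statement, so the compatibility $\scO_\lambda = \scO_\bc$ in the reductive case is not obtained by rerunning that argument over $\bk$. Instead, the paper explains that~\cite{ahr3} proves the bijection is \emph{independent of $\bk$} in the sense that the orbits $\scO_\lambda$ match across all fields under the Bala--Carter parametrization; since part~\eqref{it:lvcompat} is known over $\C$, this transfer yields it over $\bk$. Your final paragraph would be strengthened by naming this step explicitly rather than folding it into ``foundational setup.''
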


Here is a brief history of this result.
In~\cite{bezru1}, Bezrukavnikov proved part~\eqref{it:lvbij} for $\bk = \C$, using the theory of \emph{perverse-coherent sheaves}.
In~\cite{bezru-cells} he obtained a different construction of such a bijection, for which part~\eqref{it:lvcompat} is clear, and in~\cite{bezru3} he proved that the two constructions give identical bijections.
In~\cite{achar}, the first author showed that the arguments of~\cite{bezru1} can be adapted to positive characteristic, establishing part~\eqref{it:lvbij} for all $\bk$. Of course, in this bijection the left-hand side only depends on the root datum $R$, but the right-hand side involves the field $\bk$.
In~\cite{ahr3}, the authors showed that the bijection
is, in a suitable sense, ``independent of $\bk$.'' In particular, for any $\lambda \in -\bX^+$, the orbits $\scO_\lambda$ for all fields $\bk$ match under the Bala--Carter bijection. As a consequence, part~\eqref{it:lvcompat} holds for all $\bk$.

\section{Cohomology and support varieties}

\subsection{Frobenius kernel cohomology}
\label{ss:Fr-cohomology}

Let $\sg$ denote the following object:
\begin{enumerate}
\item In the quantum case, $\sg$ is the small quantum group associated to $R$ at an $\ell$th root of unity.
\item In the reductive case, $\sg$ is the (scheme-theoretic) kernel of the Frobenius map $\Fr: \sG \to \dot G$.
\end{enumerate}
Consider the trivial $\sg$-module $\bk$.  In both cases, there is a $\dot G$-equivariant ring isomorphism
\begin{equation}\label{eqn:cohom}
\Ext^\bullet_{\sg}(\bk,\bk) \cong \bk[\cN].
\end{equation}
In the reductive case, this is due to Andersen--Jantzen~\cite{aj} (and earlier, under slightly  more restrictive hypotheses, to Friedlander--Parshall~\cite{fp}). In the quantum case, this is due to Ginzburg--Kumar~\cite{gk} (see also Bendel--Nakano--Parshall--Pillen~\cite{bnpp:qggnc}).

Thanks to~\eqref{eqn:cohom}, for any two finite-dimensional $\sG$-modules $M$ and $N$, the space $\Ext^\bullet_{\sg}(M,N)$ acquires the structure of a finitely generated $\dot G$-equivariant $\bk[\cN]$-mod\-ule, or, equivalently, a $\dot G$-equivariant coherent sheaf on $\cN$.  Given a $\sG$-module $M$, we define its \emph{support variety} to be the closed subset of $\cN$ given by
\[
V_\sg(M) = \text{support of the coherent sheaf $\Ext^\bullet_{\sg}(M,M)$.}
\]
The \emph{cohomology} of $M$, denoted by $\cohsgb(M)$, is defined by
\[
\cohsgb(M) = \Ext^\bullet_{\sg}(\bk,M), \qquad\text{regarded as an object of $\Coh^{\dot G}(\cN)$.}
\]
The support of $\cohsgb(M)$ is called the \emph{relative support variety} of $M$.

We are primarily interested in the cohomology and support varieties of tilting modules. For these a conjectural description has been given by Humhreys~\cite{hum:cmr}; this description is now known to be correct in some cases, as explained below. Before explaining that we make a few preliminary remarks. It is straightforward to check that
\[
\cohsgb(\tilt(\lambda)) = 0 \qquad\text{if $\lambda \notin \fWfext \cdot_\ell 0$.}
\]
Via~\eqref{eqn:fwf-param}, $\fWfext \cdot_\ell 0$ is in bijection with $-\bX^+$.  Thus, we must study
\[
\cohsgb(\tilt(w_\mu \cdot_\ell 0)) \qquad\text{for}\qquad \mu \in -\bX^+.
\]

\subsection{Alcoves}

The study of support varieties of tilting modules requires the notion of \emph{alcoves}.  Recall that an \emph{alcove} is a non-empty subset of $\bX$ of the form
\[
\{ \lambda \in \bX \mid  \text{$n_\alpha\ell  < \langle \lambda + \rho, \alpha^\vee \rangle < (n_\alpha+1)\ell$ for all $\alpha \in \Phi^+$} \},
\]
where $(n_\alpha : \alpha \in \Phi^+)$ is some set of integers. The \emph{lower closure} of such an alcove
is defined by relaxing the left inequality to $\le$.  
Then every weight $\lambda \in \bX$ belongs to a unique lower closure of an alcove,
and if for $w \in \Wext$ we denote by $\lcalc(w)$
the lower closure of the alcove containing $w \cdot_\ell 0$, then the map $w \mapsto \lcalc(w)$
induces a bijection
\[
\Wext / \Omega  \xrightarrow{\sim}  \{ \text{lower closures of alcoves} \}.
\]
The lower closure $\lcalc(w)$ meets $\bX^+$ if and only if $w \in \fWext$. We also note that 
$V_{\sg}(\tilt(\lambda)) = V_{\sg}(\tilt(\mu))$ for any $w \in \fWext$ and $\lambda, \mu \in \lcalc(w)$ (see~\cite[Proposition~8]{hha:pcells} or~\cite[Proposition~3.1.3]{hardesty}).

\subsection{The quantum case}
\label{ss:cohom-q}

The main results about the cohomology and support varieties of tilting modules in the quantum case are the following.

\begin{thm}[\cite{bezru2}]\label{thm:bezru-coh}
Suppose we are in the quantum case. Let $\mu \in -\bX^+$, and
let $(\scO_\mu, \sigma_\mu) \in \Xi$ be the pair corresponding to $\mu$ under the Lusztig--Vogan bijection (see~\S\ref{ss:nilpotent}). The coherent sheaf $\cohsgb(\tilt(w_\mu \cdot_\ell 0))$ is (scheme-theoretically) supported on $\overline{\scO_\mu}$, and
\[
\cohsgb(\tilt(w_\mu \cdot_\ell 0))|_{\scO_\mu} \cong \scL_{\scO_\mu}(\sigma_\mu).
\]
\end{thm}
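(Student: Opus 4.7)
The plan is to realize $\cohsgb(\tilt(w_\mu \cdot_\ell 0))$ as the derived push-forward from the Springer resolution $\tilde\cN$ to $\cN$ of the image of $\tilt(w_\mu \cdot_\ell 0)$ under a derived equivalence relating a regular block of the quantum group with coherent sheaves on $\tilde\cN$. Such an equivalence, due to Arkhipov--Bezrukavnikov--Ginzburg, takes the form
\[
 \Phi: D^b \Rep_0(\sG) \simto D^b \Coh^{\dot G \times \mathbb G_m}(\tilde\cN),
\]
and is set up so as to intertwine $\Ext^\bullet_{\sg}(\bk, -)$ on the left with the derived push-forward $Rp_* : D^b \Coh^{\dot G \times \mathbb G_m}(\tilde\cN) \to D^b \Coh^{\dot G \times \mathbb G_m}(\cN)$ on the right; this compatibility ultimately rests on the Ginzburg--Kumar isomorphism~\eqref{eqn:cohom}.

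The next step is to equip $D^b \Coh^{\dot G \times \mathbb G_m}(\tilde\cN)$ with its \emph{exotic} $t$-structure and to show that, under $\Phi$, the indecomposable tilting modules in the regular block correspond precisely to the simple objects of the exotic heart. This would use the characterization of tilting modules via $\Ext$-vanishing against standards and costandards, together with the fact that $\Phi$ sends standard (resp.\ costandard) $\sG$-modules to standard (resp.\ costandard) exotic sheaves; the self-duality of the exotic $t$-structure then forces the image of an indecomposable tilting module to be a simple exotic sheaf.

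One then invokes that $Rp_*$ is $t$-exact with respect to the exotic $t$-structure on the source and the perverse coherent $t$-structure on the target, and sends simple exotic sheaves to simple perverse coherent sheaves on $\cN$. Each simple perverse coherent sheaf has the form $\mathrm{IC}(\scO, \scL)$ for a $\dot G$-orbit $\scO \subset \cN$ and an irreducible $\dot G$-equivariant vector bundle $\scL$ on $\scO$; it is scheme-theoretically supported on $\overline{\scO}$ and restricts on $\scO$ to $\scL$. It remains to match the resulting bijection between the set of indecomposable tilting modules in the regular block (parametrized by $\mu \in -\bX^+$ via $\mu \mapsto \tilt(w_\mu \cdot_\ell 0)$) and $\Xi$ with the Lusztig--Vogan bijection; this matching is the content of Bezrukavnikov's parametrization~\cite{bezru2}, consistent with part~\eqref{it:lvcompat} of Theorem~\ref{thm:lv-bij} via~\cite{bezru-cells, bezru3}.

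The main obstacle is the construction of $\Phi$ together with the exotic $t$-structure and the proof that tilting modules on the quantum side correspond to simple exotic sheaves; these structural inputs are deep and form the bulk of~\cite{bezru2}. Once they are in hand, the scheme-theoretic support statement and the restriction formula follow essentially formally from the $t$-exactness of $Rp_*$, the classification of simple perverse coherent sheaves, and the normalization of the Lusztig--Vogan bijection.
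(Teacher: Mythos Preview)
The paper does not give its own proof of this theorem: it is quoted from~\cite{bezru2}, accompanied only by the remark that $\cohsgb(\tilt(w_\mu \cdot_\ell 0))$ is the total cohomology of a certain simple perverse-coherent sheaf on $\cN$. Your outline is a correct and faithful sketch of Bezrukavnikov's argument in~\cite{bezru2} and is fully consistent with that remark---the Arkhipov--Bezrukavnikov--Ginzburg equivalence, the identification of indecomposable tilting modules with simple exotic sheaves on $\tilde\cN$, the passage via $Rp_*$ to simple perverse-coherent sheaves on $\cN$, and the matching with the Lusztig--Vogan bijection (via~\cite{bezru-cells,bezru3}) are exactly the ingredients used there.
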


The proof of this theorem gives somewhat finer information about $\cohsgb(\tilt(w_\mu \cdot_\ell 0))$: it is the total cohomology of a certain simple perverse-coherent sheaf on $\cN$.  
As explained in Section~\ref{ss:Fr-cohomology}, the study of this question was inspired by a conjecture due to Humphreys~\cite{hum:cmr} (originally stated in the reductive case, but which admits a straightforward translation to the quantum case), which indeed follows from Theorem~\ref{thm:bezru-coh}.

\begin{thm}[\cite{bezru2} -- Humphreys conjecture]\label{thm:humphreys-q}
Suppose we are in the quantum case, and let $\lambda \in \bX^+$.  If $\lambda \in \lcalc(w)$ for some $w \in \fWext$, then
\[
V_\sg(\tilt(\lambda)) = \overline{\scO_\bc},
\]
where $\bc \subset \fWext$ is the antispherical $0$-cell containing $w$.
\end{thm}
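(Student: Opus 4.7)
The plan is to deduce the statement from Theorem~\ref{thm:bezru-coh} via two successive reductions. First, by the invariance of $V_\sg(\tilt(\cdot))$ on lower closures of alcoves (recalled just above), we may assume $\lambda = w \cdot_\ell 0$. Second, applying Theorem~\ref{thm:lv-bij}(1) to the pair $(\scO_\bc, \mathcal{O}_{\scO_\bc})$ (the trivial $\dot G$-equivariant line bundle on $\scO_\bc$) produces some $\mu \in -\bX^+$ with $\scO_\mu = \scO_\bc$, and Theorem~\ref{thm:lv-bij}(2) then places $w_\mu$ in $\bc$; in particular $\overline{\scO_\bc} = \overline{\scO_\mu}$. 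It thus suffices to prove two facts: (i) the \emph{cell invariance} $V_\sg(\tilt(w\cdot_\ell 0)) = V_\sg(\tilt(w_\mu \cdot_\ell 0))$, and (ii) the \emph{identification} $V_\sg(\tilt(w_\mu\cdot_\ell 0)) = \overline{\scO_\mu}$.

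For (i), I would invoke the tensor-categorical framework of Ostrik~\cite{ostrik}: indecomposable tilting modules indexed by elements of the same antispherical $0$-cell generate the same thick tensor ideal of $\Tilt_\sG$, and support varieties are invariants of such ideals, since any tilting summand $\tilt(\nu)$ of a tensor product $\tilt(\lambda) \otimes X$ satisfies $V_\sg(\tilt(\nu)) \subseteq V_\sg(\tilt(\lambda))$ by the standard behavior of $V_\sg$ under tensor products. Hence the function $w \mapsto V_\sg(\tilt(w \cdot_\ell 0))$ is constant on each $0$-cell.

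For (ii), the lower bound $\overline{\scO_\mu} \subseteq V_\sg(\tilt(w_\mu\cdot_\ell 0))$ follows from the general inclusion $\supp \cohsgb(M) \subseteq V_\sg(M)$ (since $\cohsgb(M)$ is naturally a module over the finite $\bk[\cN]$-algebra $\Ext^\bullet_\sg(M,M)$, the identity of which acts by the identity) combined with the scheme-theoretic support statement of Theorem~\ref{thm:bezru-coh}, which pins down $\supp \cohsgb(\tilt(w_\mu \cdot_\ell 0)) = \overline{\scO_\mu}$. For the upper bound, set $T = \tilt(w_\mu\cdot_\ell 0)$ and use the isomorphism $\Ext^\bullet_\sg(T,T) \cong \cohsgb(T^\vee \otimes T)$; since $T^\vee \otimes T$ is again tilting (tensor products of tiltings are tilting in the quantum case), it decomposes as $\bigoplus_\nu m_\nu \tilt(\nu)$, and each summand on which $\cohsgb$ is nonzero is of the form $\nu = w_{\mu'}\cdot_\ell 0$ for some $\mu' \in -\bX^+$ with $w_{\mu'}$ lying in a $0$-cell $\bc' \le \bc$. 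Theorem~\ref{thm:bezru-coh} applied to $\mu'$ gives $\supp \cohsgb(\tilt(\nu)) = \overline{\scO_{\mu'}}$, and the order-preserving character of the bijection in Theorem~\ref{thm:lusztig} (i.e., $\bc' \le \bc \Rightarrow \overline{\scO_{\bc'}} \subseteq \overline{\scO_\bc}$) yields $V_\sg(T) \subseteq \overline{\scO_\bc} = \overline{\scO_\mu}$.

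The main obstacle lies in step (i), together with the cell-order/orbit-closure-order compatibility used at the end of (ii). Neither follows formally from Theorem~\ref{thm:bezru-coh} alone: both rely on the deeper tensor-categorical structure relating $\Tilt_\sG$, the antispherical Hecke module $\Masph$, and perverse-coherent sheaves on $\cN$, as developed by Ostrik~\cite{ostrik} and Bezrukavnikov~\cite{bezru2}.
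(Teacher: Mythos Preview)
The paper does not actually give a proof here: the remark immediately following the statement simply records that the result is not explicit in~\cite{bezru2} but can be deduced from its results as explained in~\cite[\S 8.4]{ahr1}. Your sketch is a correct outline of precisely such a deduction, and it matches the structure of the argument the paper alludes to (and partially reproduces later in the proof of Proposition~\ref{prop:pcell-0cell}): reduce to $\lambda=w\cdot_\ell 0$ via alcove invariance; use Ostrik's theorem (the quantum case of Theorem~\ref{thm:weight}, in its preorder form) together with the tensor-product behaviour of $V_\sg$ to see that $V_\sg(\tilt(w\cdot_\ell 0))$ depends only on the $0$-cell of $w$; and then identify this common value with $\overline{\scO_\bc}$ using Theorem~\ref{thm:bezru-coh} and the isomorphism $\Ext^\bullet_\sg(T,T)\cong\cohsgb(T^\vee\otimes T)$. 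You correctly flag the two genuinely external inputs---the preorder-compatible form of Ostrik's result and the order compatibility of Lusztig's bijection in Theorem~\ref{thm:lusztig}---both of which are available in the quantum setting.
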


\begin{rmk}
  Theorem~\ref{thm:humphreys-q} is not explicitly stated in~\cite{bezru2}, but it can be deduced from the results of~\cite{bezru2} as explained in~\cite[\S8.4]{ahr1}.
\end{rmk}

\begin{rmk}
  In the special case of type $\mathbf{A}$, Theorem~\ref{thm:humphreys-q} was proved earlier (using more elementary methods) by Ostrik~\cite{ostrik1}.
\end{rmk}  

\subsection{The reductive case}

The main results in the reductive case, stated below, come from~\cite{hardesty, ahr1}. 

\begin{thm}[\cite{ahr1}]\label{thm:ahr-coh}
Suppose we are in the reductive case, and assume either that $\sG = \SL_n$, or that $p$ is sufficiently large.  Let $\mu \in -\bX^+$, and let $(\scO_\mu, \sigma_\mu) \in \Xi$ be the pair corresponding to $\mu$ under the Lusztig--Vogan bijection (see~\S\ref{ss:nilpotent}).
Then the coherent sheaf $\cohsgb(\tilt(w_\mu \cdot_\ell 0))$ is (set-theoretically) supported on $\overline{\scO_\mu}$.
\end{thm}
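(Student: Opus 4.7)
The statement is the set-theoretic upper bound $V_\sg(\tilt(w_\mu\cdot_\ell 0))\subseteq\overline{\scO_\mu}$, which is strictly weaker than the scheme-theoretic containment of Theorem~\ref{thm:bezru-coh}. My plan is to transfer the quantum upper bound to positive characteristic using the characteristic-$p$ theory of perverse-coherent sheaves on $\cN$, which underlies part~\eqref{it:lvbij} of Theorem~\ref{thm:lv-bij} and was developed in~\cite{achar,ahr3}.

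In the quantum case, \cite{bezru2} realises $\cohsgb(\tilt(w_\mu\cdot_\ell 0))$ as the total cohomology of the simple $\dot G$-equivariant perverse-coherent sheaf $\mathcal{IC}_\mu$ on $\cN$ attached to $(\scO_\mu,\sigma_\mu)$ under the Lusztig--Vogan bijection; since $\mathcal{IC}_\mu$ is supported on $\overline{\scO_\mu}$ by construction, this yields the quantum upper bound (and indeed its scheme-theoretic strengthening). To reproduce the argument in characteristic $p$, I would combine two ingredients. First, I would invoke the positive-characteristic perverse-coherent $t$-structure on $\mathrm{D}^b\Coh^{\dot G}(\cN)$ together with the Lusztig--Vogan bijection of~\cite{achar,ahr3}, producing a characteristic-$p$ analogue of $\mathcal{IC}_\mu$ that is again supported on $\overline{\scO_\mu}$. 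Second, I would use a geometric equivalence relating a suitable category of $\sG$-representations to $\dot G$-equivariant coherent sheaves on $\widetilde{\cN}$ (available for $p\gg 0$ after Bezrukavnikov--Riche and Achar--Riche), under which $\tilt(w_\mu\cdot_\ell 0)$ is sent to a lift of the perverse-coherent sheaf from the first step. Pushing forward along the Springer map $\widetilde{\cN}\to\cN$ then computes $\cohsgb(\tilt(w_\mu\cdot_\ell 0))$, and since pushforward preserves support one obtains the required containment. For $\sG=\SL_n$ this categorical machinery is replaced by the explicit partition-theoretic computations of~\cite{hardesty}, from which the containment on Jordan types of generic elements can be read off directly.

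The main obstacle is step two: matching the tilting module $\tilt(w_\mu\cdot_\ell 0)$ with the correct perverse-coherent sheaf under the geometric equivalence. In the quantum setting Bezrukavnikov handles this by a support-reducing induction that uses the interaction between the tilting $t$-structure on the representation side and the perverse-coherent $t$-structure on $\Coh^{\dot G}(\cN)$. Transporting that induction to characteristic $p$ requires $t$-structure compatibilities that are presently available only for $p$ sufficiently large, which is exactly what forces the hypothesis in the theorem; the cross-characteristic compatibility of the Lusztig--Vogan bijection proved in~\cite{ahr3} is then the ingredient that allows one to identify which perverse-coherent sheaf appears at the end of the induction. The failure of the scheme-theoretic statement in characteristic $p$ reflects the fact that a full matching of $t$-structures is not established, so only the set-theoretic (not ideal-theoretic) portion of the Bezrukavnikov argument survives.
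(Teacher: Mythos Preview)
The paper does not supply a proof of this theorem; it is quoted from~\cite{ahr1} and only discussed in the paragraph that follows.  There is therefore no in-paper argument to compare your proposal against.

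On your sketch itself, one precision issue: your opening sentence misidentifies the statement.  The theorem bounds the support of $\cohsgb(\tilt(w_\mu\cdot_\ell 0))=\Ext^\bullet_\sg(\bk,\tilt(w_\mu\cdot_\ell 0))$, i.e.\ the \emph{relative} support variety, not $V_\sg(\tilt(w_\mu\cdot_\ell 0))$, which is the support of $\Ext^\bullet_\sg(M,M)$.  For tilting modules the two agree, but they are defined differently and you should not blur them at the outset.

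Beyond that, your strategy is a plausible high-level account of how one might transport Bezrukavnikov's quantum argument to positive characteristic, and the ingredients you name---the characteristic-$p$ perverse-coherent theory of~\cite{achar,ahr3}, a geometric equivalence identifying $\cohsgb$ with a pushforward from $\widetilde{\cN}$, and the explicit computations of~\cite{hardesty} for $\SL_n$---are indeed the circle of ideas in play.  Whether~\cite{ahr1} follows exactly this route, or instead argues more directly via integral forms and semicontinuity to compare with the quantum case, is something you would need to verify in that paper; the present paper gives no indication either way.
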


\begin{thm}[\cite{hardesty, ahr1} -- Humphreys conjecture]\label{thm:humphreys-r}
Suppose we are in the reductive case.  Assume either that $\sG = \SL_n$, or that $p$ is sufficiently large.  Let $\lambda \in \bX^+$.  If $\lambda \in \lcalc(w)$ for some $w \in \fWext$, then
\[
V_\sg(\tilt(\lambda)) = \overline{\scO_\bc},
\]
where $\bc \subset \fWext$ is the antispherical $0$-cell containing $w$.
\end{thm}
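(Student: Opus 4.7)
The plan is to reduce to the case $\lambda = w_\mu \cdot_\ell 0$ with $\mu \in -\bX^+$, establish the upper bound $V_\sg(\tilt(w_\mu \cdot_\ell 0)) \subseteq \overline{\scO_\mu}$ using Theorem~\ref{thm:ahr-coh}, and then establish the matching lower bound.

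First, by the alcove invariance of tilting support varieties (the last sentence of the section on alcoves), we may assume $\lambda = w \cdot_\ell 0$ for some $w \in \fWext$. I would then show that $V_\sg(\tilt(w \cdot_\ell 0))$ is constant as $w$ ranges over its antispherical $p$-cell; in the quantum case this is automatic, but in the reductive case it would need proof. The argument would use that tensoring with small tiltings and projecting to blocks realizes, at the level of the Grothendieck group, right multiplication by the $p$-canonical basis of $\Masph$ (via \cite{rw,jw}), combined with the standard ``tensor product'' estimate for support varieties. Since every antispherical $0$-cell $\bc$ contains a distinguished element $w_\mu$ with $\mu \in -\bX^+$ (Theorem~\ref{thm:lv-bij}\eqref{it:lvcompat}), provided all antispherical $p$-cells inside $\bc$ can be shown to produce the same tilting support variety — or else handled one by one — we reduce to the case $\lambda = w_\mu \cdot_\ell 0$ with $\mu \in -\bX^+$.

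For the upper bound, invoke the identity $V_\sg(M) = \supp \cohsgb(M \otimes M^*)$, which follows from the isomorphism $\Ext^\bullet_\sg(M,M) \cong \Ext^\bullet_\sg(\bk, M \otimes M^*)$. With $M = \tilt(w_\mu \cdot_\ell 0)$, the module $M \otimes M^*$ is again tilting, so it decomposes as a direct sum of indecomposable tiltings $\tilt(\nu)$; by Theorem~\ref{thm:ahr-coh}, each summand satisfies $\supp \cohsgb(\tilt(\nu)) \subseteq \overline{\scO_{\nu'}}$ for the corresponding $\nu' \in -\bX^+$. A two-sided-cell argument — namely, that every $\nu$ appearing in $M \otimes M^*$ has $\scO_{\nu'} \subseteq \overline{\scO_\mu}$ — then gives $V_\sg(\tilt(w_\mu \cdot_\ell 0)) \subseteq \overline{\scO_\mu}$.

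The hard step, and the main obstacle, is the matching lower bound $V_\sg(\tilt(w_\mu \cdot_\ell 0)) \supseteq \overline{\scO_\mu}$, which reduces to the non-vanishing of $\cohsgb(\tilt(w_\mu \cdot_\ell 0))$ at a generic point of $\scO_\mu$. In the quantum case (Theorem~\ref{thm:humphreys-q}) this is settled by Bezrukavnikov's identification $\cohsgb(\tilt(w_\mu \cdot_\ell 0))|_{\scO_\mu} \cong \scL_{\scO_\mu}(\sigma_\mu) \neq 0$ of Theorem~\ref{thm:bezru-coh}, whose proof relies essentially on perverse-coherent sheaves — tools not currently available in positive characteristic. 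In the reductive case I would instead attempt a descending induction on the closure order of nilpotent orbits: the regular orbit is handled by the Steinberg-type tiltings (whose support is known to be the whole nilpotent cone), and the inductive step would use translation functors to transport non-vanishing from a given orbit to its boundary strata, combined with character-theoretic comparisons. The hypothesis ``$\sG = \SL_n$ or $p$ sufficiently large'' reflects precisely the regime in which this induction can presently be made to work — either via the explicit type-$\mathbf{A}$ calculations of \cite{hardesty}, or via the Lusztig character formula for tilting modules, which holds when $p$ is large.
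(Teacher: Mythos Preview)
The paper does not give its own proof of this theorem: it is quoted from \cite{hardesty,ahr1}, so there is no in-paper argument to match against.  That said, the paper \emph{does} tell us which inclusion is the obstacle, and you have it reversed.  Immediately after Theorem~\ref{thm:humphreys-r} the authors write that the results of \cite{ahr1} give, for \emph{all} $p>h$, the lower bounds
\[
\overline{\scO_\mu} \subset \supp\bigl(\cohsgb(\tilt(w_\mu\cdot_\ell 0))\bigr)
\qquad\text{and}\qquad
\overline{\scO_\bc} \subset V_\sg(\tilt(\lambda)),
\]
and that ``our current methods do not allow us to go further in this direction.''  So the lower bound---which you describe as ``the hard step, and the main obstacle''---is in fact the part that is known unconditionally; the hypotheses ``$\sG=\SL_n$ or $p$ sufficiently large'' are needed precisely for the \emph{upper} bound.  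Your proposed descending induction on orbit closures for the lower bound is therefore aimed at the wrong target.

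Your route to the upper bound also has a gap.  You pass through $M\otimes M^*$ and then invoke a ``two-sided-cell argument'' asserting that every indecomposable summand $\tilt(\nu)$ of $\tilt(w_\mu\cdot_\ell 0)\otimes \tilt(w_\mu\cdot_\ell 0)^*$ satisfies $\scO_{\nu'}\subset\overline{\scO_\mu}$.  This is not a step you have justified, and it is essentially of the same order of difficulty as what you are trying to prove.  The actual passage from Theorem~\ref{thm:ahr-coh} (a statement about the \emph{relative} support $\supp\cohsgb$) to Theorem~\ref{thm:humphreys-r} (about $V_\sg$) is explained in \cite[\S8.4]{ahr1}, as the paper notes in the remark following Theorem~\ref{thm:humphreys-q}; it does not proceed via the tensor-square decomposition you propose.
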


Compared to the results of Section~\ref{ss:cohom-q}, these theorems suffer from two deficits.  The first is that if $\sG$ is not $\SL_n$, then the conclusions only hold when $\ell$ is ``sufficiently large,'' i.e., greater than some unknown bound that depends on the root datum $R$.  The case of $\SL_n$ suggests that one should expect this unknown bound to just be the Coxeter number in general, and the results of~\cite{ahr1} show that in the setting of Theorems~\ref{thm:ahr-coh} and~\ref{thm:humphreys-r} we always have $\overline{\scO_\mu} \subset \supp(\cohsgb(\tilt(w_\mu \cdot_\ell 0)))$ and $\overline{\scO_\bc} \subset V_\sg(\tilt(\lambda))$. But our current methods do not allow us to go further in this direction.

The second deficit is that Theorem~\ref{thm:ahr-coh}, unlike Theorem~\ref{thm:bezru-coh}, gives no description of $\cohsgb(\tilt(w_\mu \cdot_\ell 0))|_{\scO_\mu}$, nor does it involve the ``$\sigma_\mu$'' component in the Lusztig--Vogan bijection.  Already in $\SL_2$, one can check that this restriction need not be an irreducible vector bundle, so Theorem~\ref{thm:bezru-coh} cannot be copied verbatim in the reductive case.

\begin{prob}\label{prob:cohom}
Suppose we are in the reductive case.  Let $\mu \in -\bX^+$, and let $\bc \subset \fWext$ be the antispherical $0$-cell containing $w_\mu$.  What is the vector bundle $\cohsgb(\tilt(w_\mu \cdot_\ell 0))|_{\scO_\bc}$?  Can it be described in terms of the Lusztig--Vogan bijection?
\end{prob}

\section{Tensor ideals of tilting modules}

\subsection{Tensor ideals of tilting \texorpdfstring{$\sG$}{G}-modules}
\label{ss:tiltideal}

Let $\Tilt(\sG)$ denote the additive category of tilting modules for $\sG$, and let $[\Tilt(\sG)]$ denote its split Grothendieck group.  This is a free abelian group with a basis given by the classes of the indecomposable tilting modules
\[
\{ [\tilt(\lambda)] \mid \lambda \in \bX^+ \}.
\]
Since the tensor product of two tilting modules is again tilting (see~\cite[\S E.7]{jantzen} for details and references), the abelian group $[\Tilt(\sG)]$ acquires the structure of a (commutative) ring.  An ideal $I \subset [\Tilt(\sG)]$ is called a \emph{based ideal} if it is spanned (over $\Z$) by the classes of indecomposable tilting modules that it contains.  Given $m \in [\Tilt(\sG)]$, the \emph{principal based ideal} generated by $m$ is the unique minimal based ideal containing $m$.  (In general, this is larger than the usual principal ideal $(m)$ generated by $m$.) Following Ostrik and Andersen (see~\cite{ostrik, hha:pcells}), we define an equivalence relation on $\bX^+$ as follows:
\[
\lambda \Tsim \mu
\qquad
\begin{array}{@{}c@{}}
\text{if $[\tilt(\lambda)]$ and $[\tilt(\mu)]$ generate the same}\\
\text{principal based ideal of $[\Tilt(\sG)]$.}
\end{array}
\]
Equivalence classes for $\Tsim$ are called \emph{weight cells}.  In this section, we consider the problem of determining the weight cells.

\subsection{Determination of weight cells}

The following result relates weight cells to alcoves and antispherical $p$-cells.

\begin{thm}[\cite{ostrik, ahr1}]\label{thm:weight}
Let $\lambda, \mu \in \bX^+$.  Suppose $\lambda \in \lcalc(w)$ and $\mu \in \lcalc(v)$, where $w, v \in \fWext$.  Then $\lambda \Tsim \mu$ if and only if $w \sim^p v$.
\end{thm}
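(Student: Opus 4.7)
The plan is to identify the based-ideal structure of $[\Tilt(\sG)]$ with the $p$-based-submodule structure of $\Masph|_{v=1}$, via a character map that decategorifies an action of the affine Hecke category on $\Tilt(\sG)$ realized by translation (wall-crossing) functors.

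First, I would reduce to the case of regular weights. Translation functors to and from walls lying inside the closure of a single facet carry indecomposable tilting modules to indecomposable tilting modules (or to zero), and they admit biadjoints; consequently, such functors preserve principal based ideals. Hence, for any $\lambda, \nu \in \lcalc(w) \cap \bX^+$ with $w \in \fWext$, one has $\lambda \Tsim \nu$. After this reduction, the theorem becomes
\[
(w \cdot_\ell 0) \Tsim (v \cdot_\ell 0) \iff w \sim^p v \qquad \text{for } w, v \in \fWext.
\]

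Next, I would construct the character map. The key input is a categorical right action of the affine Hecke category on $\Tilt(\sG)$, realized through wall-crossing functors on the principal block of $\Rep(\sG)$. Decategorifying and specializing at $v = 1$, this should yield a $\Z$-linear isomorphism from the subgroup of $[\Tilt(\sG)]$ spanned by the regular indecomposable tilting classes onto $\Masph|_{v=1}$, sending $[\tilt(w \cdot_\ell 0)] \mapsto \puN_w|_{v=1}$ for $w \in \fWext$, and intertwining the right action by tensor product with the wall-crossing tilting modules with the right $\Hext$-action. Under this identification, the principal based ideal in $[\Tilt(\sG)]$ generated by $[\tilt(w \cdot_\ell 0)]$ is produced by iterated tensor product with wall-crossing tiltings, and therefore matches the image of the principal $p$-based submodule of $\Masph|_{v=1}$ generated by $\puN_w$. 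Since the character map is bijective between the two canonical bases, the equivalence of the two cell relations follows immediately.

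The main obstacle is establishing this categorification. In the quantum case, it is supplied by Ostrik's \cite{ostrik} Soergel-type description of tilting characters in terms of the Kazhdan--Lusztig basis. In the reductive case, it relies on the Riche--Williamson categorification of $\Masph$ by the affine Hecke category acting on the principal block and on its tilting subcategory, together with the identification of indecomposable tilting classes with the $p$-canonical basis; this is the content of \cite{ahr1}, and in general is only known in type $\mathbf{A}$ and for $p$ sufficiently large. Once this deep categorification is in hand, the passage to the stated combinatorial equivalence of cell relations is essentially formal.
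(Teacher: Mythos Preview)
Your overall strategy matches the one the paper cites: reduce to regular highest weights by translating within a single lower closure, then use the categorical action of the affine Hecke category on the principal block (realized by wall-crossing functors) to identify, after decategorification, the based-ideal structure on $[\Tilt(\sG)]$ with the $p$-based-submodule structure on $\Masph$. The paper itself gives no self-contained argument but refers to \cite{ostrik} in the quantum case and to \cite[\S7]{ahr1} in the reductive case, and it names the tilting character formula (\cite{soergel}, resp.\ \cite{amrw}) as the essential input. So on the level of approach you are aligned with the paper.

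There is, however, a genuine error in your final paragraph. You claim that the required categorification ``in general is only known in type~$\mathbf{A}$ and for $p$ sufficiently large.'' This is false: the tilting character formula in terms of the $p$-canonical basis was proved for all types and for all $p>h$ in \cite{amrw}, and accordingly Theorem~\ref{thm:weight} is stated in the paper with no such restriction. You appear to be confusing this with Theorems~\ref{thm:ahr-coh} and~\ref{thm:humphreys-r} (the Humphreys conjecture), which \emph{do} currently require $\sG=\SL_n$ or $p\gg 0$; those restrictions come from controlling support varieties, not from the tilting character formula, and are irrelevant here.

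One smaller gap: you pass from ``tensor product with wall-crossing tilting modules'' to ``principal based ideal'' without comment, but weight cells are defined using tensoring with \emph{arbitrary} tilting modules. The missing step is that tensoring with any tilting module and then projecting to the principal block decomposes as a direct sum of compositions of translation functors, so the based ideal generated by $[\tilt(w\cdot_\ell 0)]$ under the full tensor action agrees with the one generated under the Hecke action. This is not hard, but it is the hinge on which the comparison turns and should be made explicit.
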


In the quantum ($p = 0$) case, this is due to Ostrik~\cite{ostrik}; in the reductive case, it was proved by the authors in~\cite[\S7]{ahr1}. The proofs in both cases are very similar: the main ingredient is a character formula for indecomposable tilting modules of the form $\tilt(w \cdot_\ell 0)$; see~\cite{soergel} in the quantum case, and~\cite{amrw} in the reductive case.

This statement has the appealing property that it treats the quantum and reductive cases uniformly.  Nevertheless, the two cases are qualitatively rather different.  In the quantum case, Theorem~\ref{thm:lusztig} implies that there are only finitely many weight cells, and that there is a canonical bijection
\[
\{\text{quantum weight cells} \} \simbij \{ \text{$\dot G$-orbits in $\cN$} \}.
\]
Moreover, thanks to Theorem~\ref{thm:humphreys-q}, this bijection can be realized in a concrete way: the $\dot G$-orbit corresponding to a quantum weight cell is the orbit whose closure is the support variety of any tilting module in that weight cell.  Thus, Theorems~\ref{thm:humphreys-q} and~\ref{thm:weight} can be rephrased as follows:

\begin{thm}\label{thm:supp-cell-q}
Suppose we are in the quantum case, and let $\lambda, \mu \in \bX^+$.  We have $\lambda \Tsim \mu$ if and only if $V_\sg(\tilt(\lambda)) = V_\sg(\tilt(\mu))$.
\end{thm}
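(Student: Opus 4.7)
The plan is to deduce this as a direct corollary of Theorems~\ref{thm:weight}, \ref{thm:humphreys-q}, and \ref{thm:lusztig}, with essentially no new work beyond organizing the bijections.

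First, given $\lambda, \mu \in \bX^+$, I would choose $w, v \in \fWext$ with $\lambda \in \lcalc(w)$ and $\mu \in \lcalc(v)$ (such elements exist and are unique, since $\lambda, \mu$ are dominant). Let $\bc, \bc' \subset \fWext$ be the antispherical $0$-cells containing $w$ and $v$, respectively. By Theorem~\ref{thm:weight} applied in the quantum case ($p=0$), the condition $\lambda \Tsim \mu$ is equivalent to $w \sim^0 v$, i.e.\ to $\bc = \bc'$.

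On the other hand, Theorem~\ref{thm:humphreys-q} identifies
\[
V_\sg(\tilt(\lambda)) = \overline{\scO_\bc}, \qquad V_\sg(\tilt(\mu)) = \overline{\scO_{\bc'}}.
\]
So the equality $V_\sg(\tilt(\lambda)) = V_\sg(\tilt(\mu))$ is equivalent to $\overline{\scO_\bc} = \overline{\scO_{\bc'}}$, which (since orbit closures determine the open orbit within them) is in turn equivalent to $\scO_\bc = \scO_{\bc'}$. Finally, Theorem~\ref{thm:lusztig} says that the assignment $\bc \mapsto \scO_\bc$ is a bijection between antispherical $0$-cells and $\dot G$-orbits on $\cN$; thus $\scO_\bc = \scO_{\bc'}$ forces $\bc = \bc'$. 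Chaining these equivalences gives the theorem.

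There is no real obstacle here: the content is entirely packaged in the cited theorems, and the only minor point to verify is that $\overline{\scO_\bc} = \overline{\scO_{\bc'}}$ implies $\scO_\bc = \scO_{\bc'}$, which is immediate from the fact that the closure of a $\dot G$-orbit has that orbit as its unique open dense subset. The step to flag is the reliance on Theorem~\ref{thm:lusztig}: without the injectivity of $\bc \mapsto \scO_\bc$ one would only get an implication in one direction, so it is essential that in the quantum case each $\dot G$-orbit arises from a \emph{unique} antispherical $0$-cell.
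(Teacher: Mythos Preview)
Your argument is correct and follows exactly the route the paper takes: the theorem is presented there as a direct rephrasing of Theorems~\ref{thm:humphreys-q} and~\ref{thm:weight}, combined with the bijectivity of Lusztig's map in Theorem~\ref{thm:lusztig}. The only imprecision is the claim that $w,v \in \fWext$ are \emph{unique}: they are unique only modulo $\Omega$, but since antispherical cells are stable under right multiplication by $\Omega$ (Remark~\ref{rmk:cells-Wext}) this does not affect the argument.
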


In contrast, in the reductive case, there are always infinitely many weight cells (unless $\sG$ is a torus), but still only finitely many nilpotent orbits, so Theorem~\ref{thm:supp-cell-q} cannot possibly hold.  Indeed, by Theorem~\ref{thm:humphreys-r}, support varieties in the reductive case are still classified by $0$-cells (at least provided the Humphreys conjecture holds), and there is no obvious relationship between these and the $p$-cells appearing in Theorem~\ref{thm:weight}.

\begin{prob}\label{prob:cells}
In the reductive case, what is the relationship between weight cells and support varieties?  
(If the Humphreys conjecture holds, this is equivalent to understanding the relationship between antispherical $0$-cells and antispherical $p$-cells.)
\end{prob}

\section{Tilting vector bundles}

In this section, we propose conjectural solutions to Problems~\ref{prob:cohom} and~\ref{prob:cells}, placing both of them in the context of ``tilting vector bundles on nilpotent orbits.''  Before stating the conjectures, we need a generalization of the ideas from Section~\ref{ss:tiltideal}.

\subsection{Tilting modules for disconnected reductive groups}
\label{ss:tilt-disconn}

Let $H$ be a (possibly disconnected) algebraic group over $\bk$ whose identity component $H^\circ$ is a reductive group. (Such a group will be called a \emph{disconnected reductive group}.)  Assume furthermore that the characteristic $p$ of $\bk$ does not divide the order of the finite group $H/H^\circ$.  Then we may apply the theory of~\cite[\S3]{ahr2} to this group.  In particular, 
by~\cite[Theorem~3.7]{ahr2}, it makes sense to speak of standard, costandard, and tilting $H$-modules.  These classes of modules have a combinatorial parametrization which we will not recall here, except to note that the same combinatorics parametrizes \emph{irreducible} $H$-modules (see~\cite[Theorem~2.16]{ahr2}).  Thus, if $\Lambda$ is a set that parametrizes the set of irreducible $H$-modules, then we can associate to each $\sigma \in \Lambda$ an indecomposable tilting module $\tilt(\sigma)$.

\begin{lem}
The tensor product of two tilting $H$-modules is again tilting.
\end{lem}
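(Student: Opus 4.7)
The plan is to reduce to the known connected case by passing through the restriction functor to $H^\circ$. Recall from \cite[\S3]{ahr2} that a standard (resp.\ costandard) $H$-module $\weyl(\sigma)$ (resp.\ $\coweyl(\sigma)$) is built from a standard (resp.\ costandard) $H^\circ$-module of highest weight the ``underlying'' dominant weight of $\sigma$, together with the action of the finite group $H/H^\circ$ on the isotypic components. A first key observation is therefore that $\mathrm{Res}^H_{H^\circ}$ sends standard (resp.\ costandard) $H$-modules to direct sums of standard (resp.\ costandard) $H^\circ$-modules, and in particular carries modules with a standard (resp.\ costandard) filtration to modules of the same kind over $H^\circ$.

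Next, I would establish the crucial equivalence: an $H$-module $M$ is tilting if and only if $\mathrm{Res}^H_{H^\circ}(M)$ is a tilting $H^\circ$-module. The forward direction follows immediately from the previous paragraph. For the converse, since $p \nmid |H/H^\circ|$, the functor $\mathrm{Res}^H_{H^\circ}$ is exact and its left adjoint $\mathrm{Ind}^H_{H^\circ}$ is also exact, giving a Hochschild--Serre-type identification
\[
\Ext^i_H(X,Y) \cong \Ext^i_{H^\circ}(X,Y)^{H/H^\circ}
\]
for all $H$-modules $X,Y$. Combined with the Ext-vanishing criterion for standard/costandard filtrations, this shows that whether $M$ has a standard (resp.\ costandard) filtration as an $H$-module can be tested on its restriction to $H^\circ$ (after dualising or using Frobenius reciprocity to match up the relevant Ext-spaces with $\weyl(\sigma)$'s and $\coweyl(\sigma)$'s).

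With this equivalence in hand, the lemma becomes straightforward. If $M,N$ are tilting $H$-modules, then $\mathrm{Res}^H_{H^\circ}(M)$ and $\mathrm{Res}^H_{H^\circ}(N)$ are tilting $H^\circ$-modules by the equivalence, and the classical theorem in the connected reductive case (see \cite[\S E.7]{jantzen}) ensures that
\[
\mathrm{Res}^H_{H^\circ}(M \otimes N) \;\cong\; \mathrm{Res}^H_{H^\circ}(M) \otimes \mathrm{Res}^H_{H^\circ}(N)
\]
is again a tilting $H^\circ$-module. Applying the equivalence in the other direction, $M \otimes N$ is tilting for $H$.

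The main obstacle is the characterisation ``tilting for $H$ $\Leftrightarrow$ tilting after restriction to $H^\circ$''; the forward implication is tautological, but the converse genuinely uses the coprimality assumption $p \nmid |H/H^\circ|$ and the careful bookkeeping of how standards/costandards for $H$ decompose on restriction. Once this is pinned down from the setup of \cite[\S3]{ahr2}, everything else is formal.
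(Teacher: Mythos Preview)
Your argument is correct and uses the same ingredients as the paper's proof: the restriction behavior of standards and costandards from \cite[Eq.~(3.3)]{ahr2}, the identification $\Ext^i_H \cong (\Ext^i_{H^\circ})^{H/H^\circ}$ from \cite[Lemma~2.18]{ahr2}, the Ext-vanishing criterion for good filtrations, and the connected case. The only difference is organizational: the paper applies these facts directly to show that $M \otimes M'$ has a standard (resp.\ costandard) filtration over $H$, whereas you first isolate the cleaner intermediate statement ``$M$ is tilting for $H$ if and only if $\mathrm{Res}^H_{H^\circ}M$ is tilting for $H^\circ$'' and then invoke the connected tilting tensor theorem---a slightly more conceptual packaging of the same argument.
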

Of course, this fact is well known for connected reductive groups (see Section~\ref{ss:tiltideal}), and the general case will be reduced to this one.
\begin{proof}
Let $M$ and $M'$ be two modules that admit a filtration by standard modules.  We will prove that $M \otimes M'$ admits a filtration by standard modules.  A very similar argument applies to modules with a filtration by costandard modules, and then the lemma follows by combining these two statements.

Recall that for both $H$- and $H^\circ$-modules (or for any highest-weight category), a module $X$ admits a filtration by standard modules if and only if the functor $\Ext^1(X,{-})$ vanishes on all costandard modules.  Thus, it is enough to show that $\Ext^1_H(M \otimes M',N) = 0$ for any costandard $H$-module $N$.  As explained in the proof of~\cite[Lemma~2.18]{ahr2}, there is a natural isomorphism
\[
\Ext^1_H(M \otimes M', N) \cong \left(\Ext^1_{H^\circ}(M \otimes M', N)\right){}^{H/H^\circ}.
\]
According to~\cite[Eq.~(3.3)]{ahr2}, any standard $H$-module regarded as an $H^\circ$-module is a direct sum of standard $H^\circ$-modules, and likewise for costandard modules.  It follows immediately that $\Ext^1_{H^\circ}(M \otimes M', N) = 0$, so we are done.
\end{proof}

Let $\Tilt(H)$ be the additive category of tilting $H$-modules, and let $[\Tilt(H)]$ be its split Grothendieck group.  This is a free abelian group with basis $\{ [\tilt(\sigma)] \mid \sigma \in \Lambda \}$, where $\Lambda$ is some set parametrizing the irreducible $H$-modules.  The discussion in Section~\ref{ss:tiltideal} can be repeated verbatim in this setting.  In particular, we may define an equivalence relation $\Tsim_H$ on $\Lambda$ by setting
\[
\sigma \Tsim_H \tau
\qquad
\begin{array}{@{}c@{}}
\text{if $[\tilt(\sigma)]$ and $[\tilt(\tau)]$ generate the same}\\
\text{principal based ideal of $[\Tilt(H)]$.}
\end{array}
\]

\begin{rmk}
 It is likely that the weight cells for $H$ and $H^\circ$ are closely related. However, at this point we have not been able to make this idea precise, and we do not have any analogue of Theorem~\ref{thm:weight} for disconnected reductive groups.
\end{rmk}

\subsection{Tilting vector bundles on nilpotent orbits}

Let $\scO \subset \cN$ be a nilpotent orbit.  Choose a point $x \in \scO$, and let $\dot G^x$ be its stabilizer.  Under our assumptions, the natural map $\dot G/\dot G^x \to \scO$ is an isomorphism of varieties, and there is therefore an equivalence of categories
\begin{equation}\label{eqn:coh-rep-equiv}
\Coh^{\dot G}(\scO) \cong \Rep(\dot G^x).
\end{equation}
Let $\dot G^x_\unip$ be the unipotent radical of the identity component $(\dot G^x)^\circ$, and let $\dot G^x_\red = \dot G_x/\dot G^x_\unip$.  Then $\dot G^x_\red$ is a disconnected reductive group.  Our assumptions imply that $p$ does not divide the order of the finite group $\dot G^x/(\dot G^x)^\circ = \dot G^x_\red/(\dot G^x_\red)^\circ$, so the discussion of Section~\ref{ss:tilt-disconn} can be applied to it.

Any $\dot G^x_\red$-representation $M$ gives rise to coherent sheaf on $\scO$ by first inflating to $\dot G^x$ and then passing through~\eqref{eqn:coh-rep-equiv}.  In particular, every irreducible vector bundle $\scL_\scO(\sigma)$ on $\scO$ arises in this way from some irreducible $\dot G^x_\red$-representation, which we denote by $\irr_{\dot G^x_\red}(\sigma)$.  In this way, the set $\Sigma_\scO$ parametrizes the set of isomorphism classes of irreducible $\dot G^x_\red$-representations.

Each $\sigma \in \Sigma_\scO$ also gives rise to an indecomposable tilting $\dot G^x_\red$-representation $\tilt_{\dot G^x_\red}(\sigma)$, and hence to a vector bundle on $\scO$ that we denote by
$
\scT_\scO(\sigma).
$
Vector bundles obtained in this way are called \emph{tilting vector bundles}.  We define an equivalence relation on $\Sigma_\scO$ by setting
\[
\sigma \Tsim_\scO \tau
\qquad
\begin{array}{@{}c@{}}
\text{if $[\tilt_{\dot G^x_\red}(\sigma)]$ and $[\tilt_{\dot G^x_\red}(\tau)]$ generate the same}\\
\text{principal based ideal of $[\Tilt(\dot G^x_\red)]$.}
\end{array}
\]
This equivalence relation on $\Sigma_\scO$ is easily seen to be independent of the choice of $x$.
Equivalence classes for $\Tsim_\scO$ are called \emph{vector bundle cells}.

\subsection{Conjectures}

In this section we present some conjectures that might help place the Humphreys conjecture in a larger context, and facilitate its proof in full generality. We expect these conjectures to hold under the assumptions of the present paper (namely, as soon as $p>h$), but stronger conditions might be needed.

The following statement is a first step towards Problem~\ref{prob:cells}. (It can be 
regarded as a reformulation of \cite[Corollary~17]{hha:pcells} in the modern language of $p$-cells.)

\begin{prop}\label{prop:pcell-0cell}
Assume that Theorem~\ref{thm:humphreys-r} holds for our data,
and let $w, v \in \fWext$.  If $w \sim^p v$, then $w \sim^0 v$.  Thus, every antispherical $0$-cell is a union of antispherical $p$-cells.
\end{prop}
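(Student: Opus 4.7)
The plan is a three-step reduction: turn the $p$-cell relation $w \sim^p v$ into a weight-cell equivalence via Theorem~\ref{thm:weight}, convert that into an equality of support varieties, and then combine the assumed Humphreys conjecture (Theorem~\ref{thm:humphreys-r}) with Theorem~\ref{thm:lusztig} to recover equality of $0$-cells. The final sentence of the proposition is just the standard reformulation of the statement that $\sim^p$ refines $\sim^0$.

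First I would set $\lambda = w \cdot_\ell 0$ and $\mu = v \cdot_\ell 0$. Since $w, v \in \fWext$ these weights are dominant, and by definition they belong to $\lcalc(w)$ and $\lcalc(v)$ respectively. Theorem~\ref{thm:weight} then upgrades $w \sim^p v$ to $\lambda \Tsim \mu$, so $[\tilt(\lambda)]$ and $[\tilt(\mu)]$ generate the same principal based ideal of $[\Tilt(\sG)]$. Unpacking this, $\tilt(\mu)$ is a direct summand of $\tilt(\lambda) \otimes T$ for some tilting $\sG$-module $T$, and symmetrically $\tilt(\lambda)$ is a summand of $\tilt(\mu) \otimes T'$ for some tilting $T'$.

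Next I would invoke two standard facts: direct summands have smaller support varieties, and the tensor product property $V_\sg(M \otimes N) \subseteq V_\sg(M) \cap V_\sg(N)$ for cohomological support varieties. Applied to the summand relations above, these yield $V_\sg(\tilt(\mu)) \subseteq V_\sg(\tilt(\lambda))$ and the reverse inclusion by symmetry, so $V_\sg(\tilt(\lambda)) = V_\sg(\tilt(\mu))$. Using the assumed validity of Theorem~\ref{thm:humphreys-r}, these varieties equal $\overline{\scO_\bc}$ and $\overline{\scO_{\bc'}}$ respectively, where $\bc$ and $\bc'$ are the antispherical $0$-cells containing $w$ and $v$. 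Equality of orbit closures forces $\scO_\bc = \scO_{\bc'}$, and the injectivity of the Lusztig bijection in Theorem~\ref{thm:lusztig} then yields $\bc = \bc'$, i.e.\ $w \sim^0 v$.

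The only ingredient not drawn directly from the quoted theorems is the tensor-product inclusion $V_\sg(M \otimes N) \subseteq V_\sg(M)$; this is classical in both settings considered here, following from the fact that the $\bk[\cN]$-action on $\Ext^\bullet_\sg(M \otimes N, M \otimes N)$ factors through its action on $\Ext^\bullet_\sg(M, M)$ via tensoring with $\id_N$. Consequently there is no serious obstacle: the argument is essentially a clean synthesis of Theorems~\ref{thm:weight}, \ref{thm:humphreys-r}, and~\ref{thm:lusztig}, and the only mild difficulty is locating a uniform reference for the tensor-product property that covers both the small-quantum-group and the Frobenius-kernel settings.
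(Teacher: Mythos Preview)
Your argument is correct and follows essentially the same route as the paper: use Theorem~\ref{thm:weight} to pass to weight cells, extract the mutual-direct-summand relations, deduce equality of support varieties from standard tensor/summand properties, and then apply the assumed Theorem~\ref{thm:humphreys-r}. The only difference is that you make explicit the final step through Theorem~\ref{thm:lusztig} to recover $\bc = \bc'$ from $\overline{\scO_\bc} = \overline{\scO_{\bc'}}$, which the paper leaves implicit.
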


\begin{proof}
By Theorem~\ref{thm:weight}, if $w \sim^p v$, then $w \cdot_\ell 0$ and $v \cdot_\ell 0$ are in the same weight cell. This means that there exist tilting modules $T'$ and $T''$ such that
\begin{gather*}
\text{$\tilt(w \cdot_\ell 0)$ is a direct summand of $\tilt(v \cdot_\ell 0) \otimes T'$, and} \\
\text{$\tilt(v \cdot_\ell 0)$ is a direct summand of $\tilt(w \cdot_\ell 0) \otimes T''$.}
\end{gather*}
By general properties of support varieties (see \cite[\S 8.1]{ahr1}), these conditions imply that $V_\sg(\tilt(w \cdot_\ell 0)) = V_\sg(\tilt(v \cdot_\ell 0))$. Theorem~\ref{thm:humphreys-r} then tells us that $w \sim^0 v$.
\end{proof}

\begin{rmk}
 It is tempting to conjecture that in any crystallographic Coxeter group $0$-cells always decompose into $p$-cells. However, a counterexample to this phenomenon has been observed by Jensen~\cite{jensen} (for type $\mathbf{C}_3$ in characteristic $2$).
\end{rmk}

Below, we propose a refinement of Proposition~\ref{prop:pcell-0cell} that makes it more precise how cells decompose.  To state it, we need the transfer of the vector bundle cell relation across the Lusztig--Vogan bijection.  For $\lambda, \mu \in -\bX^+$, we set
\[
\lambda \LVsim \mu 
\qquad\text{if $\scO_\lambda = \scO_\mu$ and $\sigma_\lambda \Tsim_{\scO_\lambda} \sigma_\mu$.}
\]
We define the \emph{Lusztig--Vogan cells} in $-\bX^+$ to be the equivalence classes for $\LVsim$ . By definition, there is a bijection
\[
\{ \text{Lusztig--Vogan cells} \} 
\simbij
\left\{ (\scO, \bb) \,\Big|\,
\begin{array}{@{}c@{}}
\text{$\scO \subset \cN$ a nilpotent orbit, and} \\
\text{$\bb \subset \Sigma_\scO$ a vector bundle cell}
\end{array}
\right\}.
\]

\begin{conj}\label{conj:pcell-class}
Let $\lambda, \mu \in -\bX^+$, and consider $w_\lambda, w_\mu \in \fWfext$.  We have
\[
\lambda \LVsim \mu
\qquad\text{if and only if}\qquad
w_\lambda \sim^p w_\mu.
\]
\end{conj}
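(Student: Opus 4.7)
The plan is to combine three ingredients: a reduction to a fixed nilpotent orbit, a refinement of Theorem~\ref{thm:ahr-coh} describing the restriction of cohomology to that orbit, and a monoidality argument transferring the cell relations. The first ingredient is essentially Proposition~\ref{prop:pcell-0cell}: assuming Theorem~\ref{thm:humphreys-r}, if $w_\lambda \sim^p w_\mu$ then $w_\lambda \sim^0 w_\mu$, which by part~\eqref{it:lvcompat} of Theorem~\ref{thm:lv-bij} forces $\scO_\lambda = \scO_\mu$; the definition of $\LVsim$ also requires this. So both equivalence relations respect the partition of $-\bX^+$ by the map $\lambda \mapsto \scO_\lambda$, and it remains to fix a nilpotent orbit $\scO$ and prove, for $\lambda,\mu \in -\bX^+$ with $\scO_\lambda = \scO_\mu = \scO$, that $w_\lambda \sim^p w_\mu$ if and only if $\sigma_\lambda \Tsim_\scO \sigma_\mu$. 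Under the Lusztig--Vogan bijection this fiber of $-\bX^+$ is identified with $\Sigma_\scO$.

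The heart of the plan is to establish a refinement of Theorem~\ref{thm:ahr-coh} that simultaneously answers Problem~\ref{prob:cohom}:
\[
\cohsgb(\tilt(w_\mu \cdot_\ell 0))|_{\scO_\mu} \cong \scT_{\scO_\mu}(\sigma_\mu)
\]
as $\dot G$-equivariant vector bundles on $\scO_\mu$. In the quantum case this reduces to Theorem~\ref{thm:bezru-coh}, since in characteristic zero tilting equals irreducible for reductive groups, so $\scT_\scO(\sigma) = \scL_\scO(\sigma)$; the conjectured formula is thus a genuine generalization in the reductive case. I would try to construct, for each $\mu \in -\bX^+$, a ``tilting perverse-coherent'' object on $\cN$ (or on the Springer resolution) that refines the simple perverse-coherent sheaf used by Bezrukavnikov in the quantum setting, whose restriction to $\scO_\mu$ is $\scT_{\scO_\mu}(\sigma_\mu)$, and whose total cohomology matches $\cohsgb(\tilt(w_\mu \cdot_\ell 0))$.

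Granting the refined description, both implications in the conjecture should follow from monoidality. For the forward direction, if $w_\lambda \sim^p w_\mu$, Theorem~\ref{thm:weight} provides a tilting $\sG$-module $T$ with $\tilt(w_\lambda \cdot_\ell 0)$ a direct summand of $\tilt(w_\mu \cdot_\ell 0) \otimes T$, and symmetrically; applying $\cohsgb(-)|_\scO$ together with a compatibility between cohomology and tensor product on the principal block should yield an analogous summand relation between $\scT_\scO(\sigma_\lambda)$ and $\scT_\scO(\sigma_\mu)$ in $\Tilt(\dot G^x_\red)$, giving $\sigma_\lambda \Tsim_\scO \sigma_\mu$. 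The converse requires lifting: from a tilting $\dot G^x_\red$-module $R$ witnessing $\sigma_\lambda \Tsim_\scO \sigma_\mu$, one must produce a tilting $\sG$-module $T$ such that $\tilt(w_\mu \cdot_\ell 0) \otimes T$ contains $\tilt(w_\lambda \cdot_\ell 0)$ as a summand; I would attempt this by realizing $R$ as the restriction of a tilting perverse-coherent sheaf on $\cN$ and transporting back to $\Tilt(\sG)$ via the conjectured equivalence.

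The main obstacle is the refined cohomology statement itself, which requires a genuinely new theory of ``tilting perverse-coherent sheaves'' in positive characteristic, extending Bezrukavnikov's exotic $t$-structure beyond its simple objects. A secondary obstacle is the lifting step in the converse direction, which requires a monoidal functor (not merely a pairing) from vector bundles on $\scO$ to tilting $\sG$-modules compatible with $\cohsgb(-)|_\scO$; even in the quantum case this comes from the deep Bezrukavnikov equivalence between perverse-coherent sheaves and tilting modules for the small quantum group, and the correct analogue in the reductive case does not yet seem to be available.
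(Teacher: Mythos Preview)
The statement you are attempting to prove is Conjecture~\ref{conj:pcell-class}, which is genuinely open: the paper offers no proof, only partial evidence (the regular, subregular, and zero orbits, plus some low-rank cases). So there is no ``paper's own proof'' to compare against. What you have written is not a proof but a research programme, and you are candid about this yourself in your final two paragraphs.

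That said, let me flag where the programme is weakest. Your forward direction assumes a monoidal compatibility of the functor $\cohsgb(-)|_{\scO}$ with tensor product: from $\tilt(w_\lambda\cdot_\ell 0)$ being a summand of $\tilt(w_\mu\cdot_\ell 0)\otimes T$ you want $\scT_\scO(\sigma_\lambda)$ to be a summand of $\scT_\scO(\sigma_\mu)\otimes R$ for some tilting $\dot G^x_\red$-module $R$. But $\cohsgb(M\otimes N)$ is not $\cohsgb(M)\otimes\cohsgb(N)$ in any obvious sense; one has a $\bk[\cN]$-module structure on the former coming from either factor, but controlling the answer after restriction to a non-closed orbit, and showing the result is tilting for $\dot G^x_\red$, is already a substantial problem. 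Moreover $T$ need not lie in the principal block, and the summands of $\tilt(w_\mu\cdot_\ell 0)\otimes T$ other than $\tilt(w_\lambda\cdot_\ell 0)$ may have highest weights outside $\fWfext\cdot_\ell 0$, so their contribution to cohomology must be controlled.

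Your converse direction is, as you say, harder still: it requires producing a tilting $\sG$-module from a tilting $\dot G^x_\red$-module in a way compatible with $\cohsgb(-)|_\scO$. In the quantum case this is exactly the content of Bezrukavnikov's equivalence, and no analogue is currently known in the reductive case. Note also that your ``refined cohomology statement'' is precisely Conjecture~\ref{conj:cohom} of the paper, which the authors state separately and do not claim to imply Conjecture~\ref{conj:pcell-class}; your proposal would, if it worked, show the stronger result that Conjecture~\ref{conj:cohom} (plus monoidality and lifting) implies Conjecture~\ref{conj:pcell-class}, but each of those parenthetical ingredients is itself open.
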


As an immediate consequence, we obtain the following statement, which can be seen as a (conjectural) generalization of Theorem~\ref{thm:lusztig}.

\begin{prop}\label{prop:pcell-class}
If Conjecture~\ref{conj:pcell-class} holds, there is a canonical bijection
\[
\{\text{antispherical $p$-cells} \}
\simbij
\left\{ (\scO, \bb) \,\Big|\,
\begin{array}{@{}c@{}}
\text{$\scO \subset \cN$ a nilpotent orbit, and} \\
\text{$\bb \subset \Sigma_\scO$ a vector bundle cell}
\end{array}
\right\}.
\]
\end{prop}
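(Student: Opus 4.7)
The plan is to assemble the bijection from a chain of natural bijections. First, the Lusztig--Vogan bijection of Theorem~\ref{thm:lv-bij}, $\lambda \mapsto (\scO_\lambda, \sigma_\lambda)$, transports the relation $\LVsim$ on $-\bX^+$ to the equivalence on $\Xi$ whose classes are by construction the pairs $(\scO, \bb)$ with $\scO \subset \cN$ a nilpotent orbit and $\bb \subset \Sigma_\scO$ a vector bundle cell. This already gives a canonical bijection between Lusztig--Vogan cells in $-\bX^+$ and such pairs.

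Second, via the bijection $-\bX^+ \simto \fWfext$ of~\eqref{eqn:fwf-param}, Conjecture~\ref{conj:pcell-class} identifies $\LVsim$-classes in $-\bX^+$ with $\sim^p$-classes in $\fWfext$ (viewed as a subset of $\fWext$). Combining with the previous step, we obtain a canonical bijection between pairs $(\scO, \bb)$ and $\sim^p$-classes in $\fWfext$.

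Third, the inclusion $\fWfext \hookrightarrow \fWext$ sends each $\sim^p$-class in $\fWfext$ into a unique antispherical $p$-cell. This assignment is immediately injective, since two elements of $\fWfext$ lying in the same antispherical $p$-cell of $\fWext$ are in particular $\sim^p$-equivalent in $\fWfext$. The main obstacle is surjectivity: every antispherical $p$-cell must be shown to meet $\fWfext$. My approach is to combine Proposition~\ref{prop:pcell-0cell} (every antispherical $p$-cell lies in some antispherical $0$-cell) with Theorem~\ref{thm:lusztig} and Theorem~\ref{thm:lv-bij}, part~\eqref{it:lvcompat} (each antispherical $0$-cell meets $\fWfext$, since any nilpotent orbit equals $\scO_\mu$ for some $\mu \in -\bX^+$, placing $w_\mu$ in the corresponding $0$-cell). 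Promoting this from $0$-cells to $p$-cells --- ruling out a $p$-cell inside a $0$-cell that is entirely disjoint from $\fWfext$ --- is the delicate step; a natural strategy is a counting argument that uses the conjecture together with the LV bijection to enumerate the $\sim^p$-classes arising from $\fWfext$ inside a given $0$-cell, and matches this count with the total number of $p$-cells it contains.
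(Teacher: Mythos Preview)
Your first two steps are fine and match the paper's implicit setup. The gap is in your third step, and it is twofold.

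First, you invoke Proposition~\ref{prop:pcell-0cell} to get each antispherical $p$-cell inside an antispherical $0$-cell, but that proposition carries the hypothesis that Theorem~\ref{thm:humphreys-r} holds for our data. Proposition~\ref{prop:pcell-class} assumes only Conjecture~\ref{conj:pcell-class}, so you would be smuggling in an extra assumption. Second, your proposed ``counting argument'' is circular: to count the total number of antispherical $p$-cells inside a given $0$-cell you would already need to know the classification you are trying to prove. Nothing in Conjecture~\ref{conj:pcell-class} alone tells you how many $p$-cells a $0$-cell decomposes into, only how many of its intersections with $\fWfext$ there are; so you cannot rule out a stray $p$-cell disjoint from $\fWfext$ by counting.

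The paper's proof avoids all of this by citing an unconditional fact: every antispherical $p$-cell in $\fWext$ meets $\fWfext$ (see \cite[Lemma~8.8]{ahr1}). This is proved directly from the combinatorics of the $p$-canonical basis and translation functors, with no appeal to Humphreys' conjecture or to Conjecture~\ref{conj:pcell-class}. Once you have that, the proposition is immediate from your first two steps. So the missing idea is simply to invoke this lemma rather than trying to derive the surjectivity from the cell-theoretic conjectures themselves.
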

\begin{proof}
The proposition follows immediately from the fact that every antispherical $p$-cell in $\fWext$ meets $\fWfext$ 
(see \cite[Lemma~8.8]{ahr1}). 
\end{proof}
It is useful to observe that this proposed description of the antispherical $p$-cells is ``recursive,''
in the sense that it is determined by the description of the antispherical $p$-cells for $\dot G^x_\red$
(with the caveat that the groups $\dot G^x_\red$ are sometimes \emph{disconnected} reductive groups). The recursive nature is best illustrated in the case of the general linear group, which we will consider in
\S\ref{ss:gen-lin-comb}.

Finally, the following conjecture (which would provide a more faithful analogue of Theorem~\ref{thm:bezru-coh} in the reductive case) would solve Problem~\ref{prob:cohom}.

\begin{conj}\label{conj:cohom}
Suppose we are in the reductive case.  Let $\mu \in -\bX^+$, and let $(\scO_\mu, \sigma_\mu) \in \Xi$ be the pair corresponding to $\mu$ under the Lusztig--Vogan bijection. The coherent sheaf $\cohsgb(\tilt(w_\mu \cdot_\ell 0))$ is scheme-theoretically supported on $\overline{\scO_\mu}$, and
\[
\cohsgb(\tilt(w_\mu \cdot_\ell 0))|_{\scO_\mu} \cong \scT_{\scO_\mu}(\sigma_\mu).
\]
\end{conj}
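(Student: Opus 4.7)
The plan is to set up a modular analogue of Bezrukavnikov's strategy for the quantum case (Theorem~\ref{thm:bezru-coh}), replacing simple perverse-coherent sheaves on $\cN$ by \emph{tilting} perverse-coherent sheaves. Concretely, one expects the perverse-coherent t-structure of Arinkin--Bezrukavnikov on the bounded derived category $D^{\mathrm{b}} \Coh^{\dot G}(\cN)$ to be the heart of a highest-weight category over $\bk$ whose standards and costandards are indexed by $\Xi$ and given by appropriately shifted (co)standard extensions along $\scO \hookrightarrow \overline{\scO} \hookrightarrow \cN$ of the irreducible vector bundles $\scL_\scO(\sigma)$. Let $\scT(\scO,\sigma)$ denote the indecomposable tilting object labeled by $(\scO,\sigma) \in \Xi$. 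The central goal is then to prove
\[
\cohsgb(\tilt(w_\mu \cdot_\ell 0)) \cong \scT(\scO_\mu, \sigma_\mu)
\]
in $\Coh^{\dot G}(\cN)$, after taking total cohomology on the right-hand side. Both parts of the conjecture follow from this identification: by construction $\scT(\scO_\mu,\sigma_\mu)$ is scheme-theoretically supported on $\overline{\scO_\mu}$, and its restriction to $\scO_\mu$ is, via the equivalence~\eqref{eqn:coh-rep-equiv}, the coherent sheaf associated with $\tilt_{\dot G^x_\red}(\sigma_\mu)$, which is precisely $\scT_{\scO_\mu}(\sigma_\mu)$.

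First I would carry out the local computation identifying $\scT(\scO,\sigma)|_\scO$ with $\scT_\scO(\sigma)$: on an orbit $\scO$ the perverse-coherent heart restricted to $\scO$ should be equivalent to $\Rep(\dot G^x_\red)$ (via~\eqref{eqn:coh-rep-equiv} together with inflation from $\dot G^x$ to $\dot G^x_\red$), and standard/costandard objects should restrict to the standards/costandards of this reductive highest-weight category; the tilting objects then match by the usual characterization via $\Ext$-vanishing against standards and costandards. Second, I would upgrade Theorem~\ref{thm:ahr-coh} to show that $\cohsgb(\tilt(w_\mu \cdot_\ell 0))$, viewed as an object of $D^{\mathrm{b}}\Coh^{\dot G}(\cN)$, lies in the perverse-coherent heart and admits both a standard and a costandard filtration, hence is tilting perverse-coherent. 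The natural strategy is to transport the tilting structure across a modular Bezrukavnikov-type equivalence between a block of $\Rep(\sg)$ (or a mixed/graded refinement thereof) and perverse-coherent sheaves on $\cN$, in the spirit of~\cite{bezru2}. Finally I would pin down the label $(\scO,\sigma) = (\scO_\mu,\sigma_\mu)$ by combining Theorem~\ref{thm:ahr-coh} (which forces $\scO = \scO_\mu$) with a generic-rank calculation over $\scO_\mu$ that identifies the $\sigma$-component with the Lusztig--Vogan partner of $\mu$; Theorem~\ref{thm:lv-bij}\eqref{it:lvcompat} and compatibility with reduction from the quantum case are the natural tools here.

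The principal obstacle is the second step: in positive characteristic, a clean equivalence relating modular tilting $\sG$-modules (or their $\sg$-cohomology) to tilting perverse-coherent sheaves on $\cN$ is not available outside the favorable regimes of~\cite{hardesty, ahr1}. Two plausible routes suggest themselves: one through a modular coherent Satake-type framework on the affine Grassmannian, transporting tilting parity sheaves to tilting perverse-coherent sheaves; the other through a deformation argument from the quantum case, where Theorem~\ref{thm:bezru-coh} already yields the simple perverse-coherent analogue. In both approaches one must control how the tilting structure degenerates in positive characteristic, since there simple and tilting perverse-coherent sheaves no longer coincide (this is exactly the phenomenon responsible for $\cohsgb(\tilt(w_\mu \cdot_\ell 0))|_{\scO_\mu}$ being reducible already for $\SL_2$). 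Building this dictionary appears to be the essential new input needed to push beyond Theorem~\ref{thm:ahr-coh} and establish Conjecture~\ref{conj:cohom} in full generality.
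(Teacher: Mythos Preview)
The paper does \emph{not} prove this statement: Conjecture~\ref{conj:cohom} is stated as an open conjecture, with no proof offered. Indeed, the remark immediately following it explicitly says that while Theorem~\ref{thm:bezru-coh} was proved via perverse-coherent sheaves, ``at this point we do not have any conjecture for such a description in the reductive case.'' So there is no paper proof to compare against.

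Your proposal is accordingly not a proof but a strategic outline, and you are upfront about this. The outline is sensible and in fact goes somewhat beyond what the paper ventures: you propose that the correct perverse-coherent object attached to $\tilt(w_\mu \cdot_\ell 0)$ should be the indecomposable \emph{tilting} perverse-coherent sheaf $\scT(\scO_\mu,\sigma_\mu)$, whereas the authors decline to formulate any such refinement. Your local computation (that restriction to $\scO$ takes tilting perverse-coherent sheaves to tilting vector bundles) is plausible and would indeed yield both assertions of the conjecture once the global identification is in hand.

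That said, your own diagnosis of the obstacle is correct and is precisely why this remains a conjecture: the second step requires a modular analogue of Bezrukavnikov's equivalence identifying $\cohsgb(\tilt(w_\mu \cdot_\ell 0))$ with a tilting perverse-coherent sheaf, and no such equivalence is currently available in the required generality. The two routes you sketch (modular coherent Satake, or degeneration from the quantum case) are natural, but neither is known to work; in particular, controlling how the tilting property behaves under reduction mod $p$ is exactly the missing ingredient. So what you have written is a reasonable research program, consistent with the spirit of the paper, but it should not be presented as a proof.
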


\begin{rmk}
 Theorem~\ref{thm:bezru-coh} was proved by giving an even more precise description of $\cohsgb(\tilt(w_\mu \cdot_\ell 0))$, as the cohomology of a perverse-coherent sheaf on $\cN$. Unfortunately, at this point we do not have any conjecture for such a description in the reductive case.
\end{rmk}

These conjectures yield a partial generalization of Theorem~\ref{thm:supp-cell-q}, as follows.

\begin{prop}
Suppose that Theorem~\ref{thm:humphreys-r} holds for our data, and that Conjectures~\ref{conj:pcell-class} and \ref{conj:cohom} hold. Let $\lambda, \mu \in -\bX^+$. We have $w_\lambda \cdot_\ell 0 \Tsim w_\mu \cdot_\ell 0$ if and only if the following conditions both hold:
\begin{enumerate}
\item $V_\sg(\tilt(w_\lambda \cdot_\ell 0)) = V_\sg(\tilt(w_\mu \cdot_\ell 0))$.
\item Let $\scO$ be the dense orbit in the support varieties mentioned above.  Then the tilting vector bundles $\cohsgb(\tilt(w_\lambda \cdot_\ell 0))|_\scO$ and $\cohsgb(\tilt(w_\mu \cdot_\ell 0))|_\scO$ belong to the same vector bundle cell.
\end{enumerate}
\end{prop}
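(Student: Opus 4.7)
The plan is to factor both sides of the claimed biconditional through the Lusztig--Vogan cell relation $\LVsim$. First I would apply Theorem~\ref{thm:weight} (using that $w_\lambda \cdot_\ell 0 \in \lcalc(w_\lambda)$ and similarly for $\mu$) to reduce $w_\lambda \cdot_\ell 0 \Tsim w_\mu \cdot_\ell 0$ to $w_\lambda \sim^p w_\mu$. By Conjecture~\ref{conj:pcell-class}, the latter is equivalent to $\lambda \LVsim \mu$, that is, to the conjunction of $\scO_\lambda = \scO_\mu$ and $\sigma_\lambda \Tsim_{\scO_\lambda} \sigma_\mu$. The remaining task is to identify these two conditions with~(1) and~(2) of the proposition.

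To match condition~(1) with the equality $\scO_\lambda = \scO_\mu$, I would apply Theorem~\ref{thm:humphreys-r} to rewrite $V_\sg(\tilt(w_\lambda \cdot_\ell 0))$ as $\overline{\scO_{\bc_\lambda}}$, where $\bc_\lambda$ denotes the antispherical $0$-cell containing $w_\lambda$, and similarly for $\mu$. Theorem~\ref{thm:lv-bij}\eqref{it:lvcompat} then identifies $\scO_{\bc_\lambda}$ with $\scO_\lambda$, so condition~(1) becomes $\overline{\scO_\lambda} = \overline{\scO_\mu}$, which is the same as $\scO_\lambda = \scO_\mu$ since an orbit closure determines its dense orbit.

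Granting this equality, the orbit $\scO$ appearing in condition~(2) is unambiguously $\scO_\lambda = \scO_\mu$, and Conjecture~\ref{conj:cohom} supplies the isomorphisms $\cohsgb(\tilt(w_\lambda \cdot_\ell 0))|_\scO \cong \scT_\scO(\sigma_\lambda)$ and $\cohsgb(\tilt(w_\mu \cdot_\ell 0))|_\scO \cong \scT_\scO(\sigma_\mu)$. Condition~(2) therefore asserts that $\scT_\scO(\sigma_\lambda)$ and $\scT_\scO(\sigma_\mu)$ lie in the same vector bundle cell, which by the definition of $\Tsim_\scO$ is exactly $\sigma_\lambda \Tsim_\scO \sigma_\mu$. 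Assembling the two matchings yields the desired equivalence.

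I do not expect a serious obstacle: the argument is essentially bookkeeping built from the three assumed inputs. The one subtlety lies in the ``if'' direction, where one must extract $\scO_\lambda = \scO_\mu$ from~(1) \emph{before} condition~(2) can be parsed, since a priori~(2) presupposes that the two support varieties share a common dense orbit. This ordering should be made explicit in the write-up, but no further ideas are required.
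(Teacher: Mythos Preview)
Your proposal is correct. The paper itself does not supply a proof of this proposition; it is stated without argument, presumably because the authors regard it as immediate bookkeeping from Theorem~\ref{thm:weight}, Theorem~\ref{thm:humphreys-r}, Theorem~\ref{thm:lv-bij}\eqref{it:lvcompat}, and the two conjectures. Your write-up unpacks exactly this chain of equivalences, and the subtlety you flag about the ordering in the ``if'' direction is a genuine (if minor) point worth recording.
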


\section{Combinatorics for general linear groups}
\label{ss:gen-lin-comb}

Assume that Conjecture~\ref{conj:pcell-class} holds, and suppose $\sG$ is a product of general linear groups.  It is well known that the reductive quotient of the stabilizer of any nilpotent element in $\cN$ is again a product of general linear groups.  Then, according to Proposition~\ref{prop:pcell-class}, weight cells for $\sG$ are in bijection with pairs of the form
$(\scO, \bb)$ where
$\scO \subset \cN$ is a nilpotent orbit for $\sG$, and $\bb$ is a weight cell for a (usually smaller) product of general linear groups.
But now we can iterate the process: $\bb$ is again classified by pairs consisting of a nilpotent orbit and a weight cell for a (usually smaller) product of general linear groups.

In this section we explain how to record this procedure with combinatorics. For $\sa = (a_1, \ldots, a_k)$ a $k$-tuple of positive integers, we set
\[
\GL_\sa = \GL_{a_1} \times \cdots \times \GL_{a_k},
\]
and choose in the standard way the maximal torus and (negative) Borel subgroup. Of course, this group identifies in a canonical way with its Frobenius twist.

We define a \emph{multipartition} of $\sa$ to be a $k$-tuple of partitions $\pi = (\pi_1, \ldots, \pi_k)$, where $\pi_i \vdash a_i$.  To indicate that $\pi$ is a multipartition of $\sa$, we write
\[
\pi \vDash \sa.
\]
Multipartitions of $\sa$ parametrize nilpotent orbits in $\GL_\sa$: for $\pi \vDash \sa$ we denote by $x_\pi$ a fixed nilpotent matrix with Jordan blocks of sizes determined by $\pi$.

Next, write each consituent partition of $\pi$ as a list of parts, with multiplicities given as exponents: say
\[
\pi_i = [\pi_{i,1}^{b_{i,1}}, \pi_{i,2}^{b_{i,2}}, \ldots, \pi_{i,m_i}^{b_{i,m_i}}]
\]
where $\pi_{i,1} > \pi_{i,2} > \cdots > \pi_{i,m_i} > 0$, and $b_{i,j} \ge 1$ for all $j$. The list of all multiplicities in $\pi$ is denoted by
\[
\mult(\pi) = (b_{1,1}, b_{1,2}, \ldots, b_{1,m_1}, b_{2,1}, b_{2,2}, \ldots, b_{2,m_2}, \ldots, b_{k,1}, b_{k,2},\ldots, b_{k,m_k}).
\]
There there is an isomorphism
\[
(\GL_\sa)^{x_\pi}_\red \cong \GL_{\mult(\pi)}.
\]
Fix such an isomorphism for each multipartition $\pi$. (An explicit choice of isomorphism is given e.g.~in~\cite[\S 3.8]{jantzen-nilp}.)
Then Proposition~\ref{prop:pcell-class} says that there is a bijection
\[
\{\text{weight cells for $\GL_\sa$}\} 
\simbij
\left\{
(\pi, \bb) \,\Big|\,
\begin{array}{@{}c@{}}
\text{$\pi \vDash \sa$, and $\bb$ is a weight}\\
\text{cell for $\GL_{\mult(\pi)}$}
\end{array}\right\}.
\]
We can iterate this process to obtain the following result.

\begin{prop}
Suppose Conjecture~\ref{conj:pcell-class} 
holds, and let $\sa = (a_1, \ldots, a_k)$ be a $k$-tuple of positive integers.  There is a bijection
\[
\{ †\text{antispherical $p$-cells for $\GL_\sa$} \} 
\simbij
\{ (\pi^{(1)}, \pi^{(2)}, \ldots ) \},
\]
where the right-hand side is the set of sequences of multipartitions satisfying the following conditions:
\begin{enumerate}
\item $\pi^{(1)} \vDash \sa$, and for each $j > 1$, $\pi^{(j)} \vDash \mult(\pi^{(j-1)})$.\label{it:mpdash}
\item For $j$ sufficiently large, we have $\mult(\pi^{(j)}) = (1,1, \ldots, 1)$.\label{it:evtriv}
\end{enumerate}
\end{prop}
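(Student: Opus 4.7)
The plan is to prove this by iterating Proposition~\ref{prop:pcell-class}, exploiting the fact that for a product of general linear groups $\GL_\sa$, the reductive quotients of stabilizers of nilpotent elements are again connected products of general linear groups.

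First, I would apply Proposition~\ref{prop:pcell-class} to $\sG = \GL_\sa$: antispherical $p$-cells for $\GL_\sa$ are in bijection with pairs $(\scO, \bb)$, where $\scO$ is a nilpotent orbit and $\bb$ is a vector bundle cell on $\scO$. Parametrizing nilpotent orbits by multipartitions gives $\scO = \scO_{\pi^{(1)}}$ for some $\pi^{(1)} \vDash \sa$; and since the reductive quotient $(\GL_\sa)^{x_{\pi^{(1)}}}_\red \cong \GL_{\mult(\pi^{(1)})}$ is connected, the set $\Sigma_{\scO_{\pi^{(1)}}}$ is canonically identified with $\bX^+(\GL_{\mult(\pi^{(1)})})$ and the vector bundle cell relation $\Tsim_{\scO_{\pi^{(1)}}}$ becomes, by construction, the weight cell relation for $\GL_{\mult(\pi^{(1)})}$. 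Theorem~\ref{thm:weight} further identifies these weight cells with antispherical $p$-cells for $\GL_{\mult(\pi^{(1)})}$. Combining these identifications yields a bijection between antispherical $p$-cells for $\GL_\sa$ and pairs $(\pi^{(1)}, \bc_1)$ with $\bc_1$ an antispherical $p$-cell for $\GL_{\mult(\pi^{(1)})}$.

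Iterating this reduction produces the sequence $(\pi^{(1)}, \pi^{(2)}, \ldots)$ satisfying condition~\eqref{it:mpdash}. For termination (condition~\eqref{it:evtriv}), I would observe that $|\sa| := \sum_i a_i$ strictly decreases at each step unless $\sa = (1,\ldots,1)$: writing $\pi_i = [\pi_{i,1}^{b_{i,1}}, \ldots, \pi_{i,m_i}^{b_{i,m_i}}]$ one has $a_i = \sum_j b_{i,j}\pi_{i,j} \ge m_i$, with equality forcing $b_{i,j} = \pi_{i,j} = 1$ for each $j$ and hence $a_i = 1$. After finitely many iterations we therefore arrive at a torus $T = \GL_{(1,\ldots,1)}$, which has a unique antispherical $p$-cell: $\Wf$ is trivial so $\Masph = \Hext$, and right multiplication by $H_{t_\mu}$ carries $\puN_{t_\lambda}$ to $\puN_{t_{\lambda+\mu}}$, so every basis element generates the entire module. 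From that index onward all subsequent $\pi^{(j)}$ are forced to be $([1],\ldots,[1])$, and no information is lost by presenting the sequence as infinite.

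The main obstacle is essentially organizational: one must verify the identification of vector bundle cells on $\scO_\pi$ with weight cells for $\GL_{\mult(\pi)}$ (which is immediate because the reductive quotient is connected, keeping us clear of Section~\ref{ss:tilt-disconn} where no analogue of Theorem~\ref{thm:weight} is available), and align the finite iteration that terminates at a torus with the infinite sequences in the stated parametrization. One should also check in passing that the running hypothesis $p > h$ persists throughout the iteration, which follows from the observation that each multiplicity $b_{i,j}$ is bounded by $a_i$, so the Coxeter number of $\GL_{\mult(\pi)}$ does not exceed that of $\GL_\sa$.
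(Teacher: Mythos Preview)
Your overall strategy of iterating Proposition~\ref{prop:pcell-class} matches the paper's, and the identification of vector bundle cells with weight cells for $\GL_{\mult(\pi)}$ (using connectedness of the reductive centralizer) is fine. The gap is in the termination step.

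Your claim that $|\sa| = \sum_i a_i$ \emph{strictly} decreases unless $\sa = (1,\ldots,1)$ is false. Take $\sa = (2)$ and the zero orbit $\pi = ([1^2])$: then $\mult(\pi) = (2) = \sa$, so $|\sa|$ is unchanged. More generally, for any $\sa$ the zero orbit has $\pi_i = [1^{a_i}]$ and $\mult(\pi) = \sa$. Your computation ``$a_i = \sum_j b_{i,j}\pi_{i,j} \ge m_i$'' compares $a_i$ with the number $m_i$ of \emph{distinct} parts, but what you need is a comparison with $\sum_j b_{i,j}$, the contribution of $\pi_i$ to $|\mult(\pi)|$; and for that the correct inequality $a_i \ge \sum_j b_{i,j}$ is only \emph{weak}, with equality exactly at the zero orbit. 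So your measure does not rule out the possibility that the iteration visits the zero orbit forever, and the map you construct is not a priori known to land in eventually trivial sequences.

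This is precisely the point where the paper does real work. It observes (equivalently, using semisimple rank) that the measure is weakly decreasing with equality only at the zero orbit, and then gives a nontrivial argument to exclude the ``zero orbit forever'' scenario: using the explicit form of the Lusztig--Vogan bijection on the zero orbit, it shows that a $p$-cell with this behavior would contain elements $w_\mu$ with $\mu \in \ell^m \bX$ for all $m \ge 1$, forcing the corresponding tilting modules to be projective over every Frobenius kernel $\sG_m$, which contradicts~\cite[Lemma~E.8]{jantzen}. You will need an argument of this kind (or some other obstruction to infinite repetition of the zero orbit) to close the gap.
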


The only multipartition of $(1,1,\ldots, 1)$ is $([1],[1], \ldots, [1])$, so as soon as condition~\eqref{it:evtriv} above holds, all later terms in the sequence must be trivial. A sequence of multipartitions satisfying condition~\eqref{it:evtriv} above will be said to be \emph{eventually trivial}.

\begin{proof}
Let $\cP$ be the set of all sequences of multipartitions satisfying condition~\eqref{it:mpdash} in the statement above, and let $\cP_0 \subset \cP$ be the subset consisting of eventually trivial sequences.  In the discussion preceding the proposition, we have constructed a map
\[
\gamma: \{ \text{antispherical $p$-cells} \} \to \cP;
\]
more precisely, to an antispherical $p$-cell $\bc$ we associate the multipartition $\pi$ corresponding to the orbit attached to any element $w \in \fWfext \cap \bc$ under the Lusztig--Vogan bijection (this intersection is nonempty, as explained in the proof of Proposition~\ref{prop:pcell-class}), then repeat the procedure for the antispherical $p$-cell corresponding to the weight cell in $(\GL_\sa)^{x_\pi}_\red \cong \GL_{\mult(\pi)}$ containing the simple module attached to $w$ (see Theorem~\ref{thm:weight}), and continue.

Let us first show that $\gamma$ takes values in $\cP_0$.  Let $\bc$ be an antispherical $p$-cell, and suppose $\gamma(\bc) = (\pi^{(1)}, \pi^{(2)}, \ldots)$.  Let $\sa^0 = \sa$, and for $j \ge 1$, let $\sa^j = \mult(\pi^{(j)})$.  Suppose $\sa^j = (a^j_1, \ldots, a^j_{m_j})$, and then let
\[
r^j = \sum_{s=1}^{m_j} (a^j_s - 1).
\]
The integer $r^j$ is the semisimple rank of the group $\GL_{\sa^j}$, which in turn is the reductive quotient of the centralizer of a nilpotent element in the Lie algebra of $\GL_{\sa^{j-1}}$.  Thus, $r^j$ is a weakly decreasing function of $j$.  If $r^j$ ever becomes $0$, then $\GL_{\sa^j}$ is a torus, and the sequence $\gamma(\bc)$ is trivial from then on.  It is easily checked that if $\pi^{(j)}$ does not correspond to the zero nilpotent orbit in $\GL_{\sa^{j-1}}$, then $r^j < r^{j-1}$.

Thus, if $\gamma(\bc)$ is \emph{not} eventually trivial, then after discarding finitely many terms at the beginning of the sequence, we may assume that all the multipartitions $\pi^{(j)}$ correspond to the zero orbit.  This implies that $\sa^j = \sa$ for all $j$.  Let $\scO_0$ denote the zero nilpotent orbit for $\GL_\sa$.

Choose (as we may) a weight $\mu \in -\bX^+$ such that $w_\mu \in \bc \cap \fWfext$.  Since $\mu$ corresponds under the Lusztig--Vogan bijection to a pair involving $\scO_0$, it follows from~\cite[Proposition~4.9 and its proof]{achar2} (see also~\cite[Proposition~6]{hha:pcells}) that $\mu \in -\bX^+ - 2\rho$, and that it corresponds under the Lusztig--Vogan bijection to $(\scO_0, w_0\mu - 2\rho)$. (Here $w_0$ is the longest element in $\Wf$, and we parametrize simple equivariant vector bundles on $\scO_0$, i.e.~simple $\GL_\sa$-modules, by their highest weight.) We may replace the weight $w_0\mu - 2\rho$ by any other weight in its weight cell, at the expense of making a corresponding change in $\mu$.  Thus, we may assume without loss of generality that $w_0\mu - 2\rho \in \fWfext \cdot_\ell 0$, say $w_0\mu - 2\rho = w_{\mu_1} \cdot_\ell 0$ for some $\mu_1 \in -\bX^+$.  Since $\mu_1$ again corresponds under the Lusztig--Vogan bijection to a pair involving $\scO_0$, we can repeat this reasoning as many times as we wish.  If we repeat it $m$ times, we obtain a sequence of elements $\mu_0 = \mu, \mu_1, \mu_2, \ldots, \mu_m \in -\bX^+$ that satisfy
\[
w_0\mu_{i-1} - 2\rho = w_{\mu_i} \cdot_\ell 0
\qquad\text{and}\qquad
\mu_i \in -\bX^+ - 2\rho\qquad\text{for $i = 1, \ldots, m$.}
\]
The latter condition implies that $w_{\mu_{i}} \cdot_\ell 0 = w_0(\ell\mu_{i} + \rho) - \rho = \ell w_0\mu_{i} - 2\rho$, and the former condition then tells us that $w_0\mu_{i-1} - 2\rho = w_{\mu_{i}} \cdot_\ell 0 = \ell w_0\mu_{i} - 2\rho$, and hence
\[
\mu_{i-1} = \ell\mu_i \qquad\text{for $i = 1, \ldots, m$.}
\]
It follows that $\mu \in -\ell^m\bX^+$.  By letting $m$ vary, we see that $\bc$ contains elements $w_\mu$ with $\mu \in \ell^m\bX \cap (-\bX^+ -2\rho)$ for any $m$.  For such $\mu$, for any simple coroot $\alpha^\vee$ we have $\la w_0\mu, \alpha^\vee \ra \ge \ell^m$, and hence
\[
\la w_\mu \cdot_\ell 0, \alpha^\vee \ra = \la \ell w_0\mu - 2\rho, \alpha^\vee \ra \ge \ell^{m+1} - 2 \geq \ell^{m} - 1.
\]
In view of~\cite[Lemma~E.8]{jantzen}, this implies that $\tilt(w_\mu \cdot_\ell 0)$ is projective for the $m$-th Frobenius kernel $\sG_{m}$.

Finally we observe that if $\nu, \nu' \in \bX^+$ are in the same weight cell, then $\tilt(\nu)$ is projective over $\sG_{r}$ if and only if $\tilt(\nu')$ is.  The considerations in the preceding paragraph show that for all $\nu$ in the weight cell corresponding to $\bc$, $\tilt(\nu)$ is projective over $\sG_{m}$ for all $m \ge 1$. But~\cite[Lemma~E.8]{jantzen} also implies that a given $\tilt(\nu)$ can be projective over $\sG_{r}$ only for finitely many $r$, so we have a contradiction.

Now that we have shown that $\gamma$ takes values in $\cP^0$, a straightforward induction argument on the number of nontrivial terms in $\gamma(\bc)$ shows that this map is both injective and surjective.
\end{proof}

Table~\ref{tab:cell} shows the sequences of multipartitions for a few small general linear groups.  In this table, the point at which the sequence of multipartitions becomes trivial is indicated by a bullet ($\bullet$).

\begin{table}[t]
\[
\hbox{\small$\begin{array}[t]{l}
\text{Weight cells for $\GL_2$} \\
\hline
(([2]), \bullet) \\
(([1^2]), ([2]), \bullet) \\
(([1^2]), ([1^2]), ([2]), \bullet) \\
\qquad \vdots \\
\ \\
\ \\
\text{Weight cells for $\GL_3$} \\
\hline
(([3]), \bullet) \\
(([2,1]), \bullet) \\
(([1^3]), ([3]), \bullet) \\
(([1^3]), ([2,1]), \bullet) \\
(([1^3]), ([1^3]), ([3]), \bullet) \\
(([1^3]), ([1^3]), ([2,1]), \bullet) \\
\qquad\vdots
\end{array}
\qquad
\begin{array}[t]{l}
\text{Weight cells for $\GL_4$} \\
\hline
(([4]), \bullet) \\
(([3,1]), \bullet) \\
(([2^2]), ([2]), \bullet) \\
(([2^2]), ([1^2]), ([2]), \bullet) \\
(([2^2]), ([1^2]), ([1^2]), ([2]), \bullet) \\
\qquad \vdots \\
(([2,1^2]), ([1],[2]), \bullet) \\
(([2,1^2]), ([1],[1^2]), ([1],[2]), \bullet) \\
(([2,1^2]), ([1],[1^2]), ([1],[1^2]), ([1],[2]), \bullet) \\
\qquad \vdots \\
(([1^4]), ([4]), \bullet) \\
(([1^4]), ([3,1]), \bullet) \\
(([1^4]), ([2^2]), ([2]), \bullet) \\
(([1^4]), ([2^2]), ([1^2]), ([2]), \bullet) \\
(([1^4]), ([2^2]), ([1^2]), ([1^2]), ([2]), \bullet) \\
\qquad \vdots \\
(([1^4]), ([2,1^2]), ([1],[2]), \bullet) \\
(([1^4]), ([2,1^2]), ([1],[1^2]), ([1],[2]), \bullet) \\
(([1^4]), ([2,1^2]), ([1],[1^2]), ([1],[1^2]), ([1],[2]), \bullet) \\
\qquad \vdots
\end{array}$}
\]
\caption{Weight cells in general linear groups}\label{tab:cell}
\end{table}

\section{Evidence for the conjectures}

In case Theorem~\ref{thm:humphreys-r} holds, Proposition~\ref{prop:pcell-0cell} and Theorem~\ref{thm:lusztig} together give us a map
\[
\{\text{antispherical $p$-cells}\} \to \{ \text{nilpotent orbits} \}.
\]
In this section, we will consider a few cases where this map can be explicitly upgraded to a bijection of the form considered in Proposition~\ref{prop:pcell-class}.

\subsection{The regular orbit}

Let $\scO_\reg$ be the regular nilpotent orbit for $\dot G$.  If $x \in \scO_\reg$, then $\dot G^x_\red$ can be identified with the center of $\dot G$.  From this, one can see that there is a unique vector bundle cell on $\scO_\reg$, so we need to check that the corresponding antispherical $0$-cell (namely, $\Omega$) consists of a single antispherical $p$-cell. This follows from~\cite[Example~5.5]{ahr1}. 

\subsection{The subregular orbit}

Assume that $\sG$ is quasi-simple and not of type $\mathbf{A}_1$, and let $\scO_\sreg$ be the subregular nilpotent orbit.  One can check case-by-case that for $x \in \scO_\sreg$, the identity component $(\dot G^x_\red)^\circ$ of the centralizer is a torus, so there is a unique vector bundle cell on this orbit.  We must again check that the corresponding antispherical $0$-cell is also an antispherical $p$-cell.  If $\sG$ is either simply laced or of type $\mathbf{G}_2$, this can be be deduced from the results of~\cite{ras:sctm}.  (It should be possible to prove it by similar methods in the remaining cases.)

\subsection{The zero orbit}

Let $\bc_0$ be the antispherical $0$-cell corresponding to the zero nilpotent orbit $\scO_0$.
Since the centralizer of the zero nilpotent element is isomorphic to $\sG$, the part of Proposition~\ref{prop:pcell-class} corresponding to $\scO_0$ can be rephrased as a bijection
\[
\{ \text{antispherical $p$-cells contained in $\bc_0$} \}
\simbij
\{ \text{weight cells for $\sG$} \}.
\]
Such a bijection can be deduced from~\cite[Proposition~14]{hha:pcells}.

\subsection{Some low rank cases}

If $\sG$ is of type $\mathbf{A}_1$ or $\mathbf{A}_2$, the preceding paragraphs yield Proposition~\ref{prop:pcell-class} in full.

If $\sG$ is of type $\mathbf{B}_2$, there is one remaining nilpotent orbit.  Proposition~\ref{prop:pcell-class} can be established in this case by combining the explicit determination of the Lusztig--Vogan bijection (see~\cite[Appendix B]{a:phd}) with the description of the corresponding $p$-cells given in~\cite[Corollary~10.32]{jensen}.


\end{document}